\documentclass[a4paper,12pt]{article}
\usepackage{amsmath,amsfonts,amscd,amssymb}
\usepackage[margin=1in]{geometry}
\usepackage[utf8]{inputenc}
\usepackage[active]{srcltx}
\usepackage[T1]{fontenc}
\usepackage{graphicx}
\usepackage{bbm}
\usepackage{url}
\usepackage{color}
\usepackage{stmaryrd}
\usepackage[bf,footnotesize,margin=2cm]{caption}
\usepackage[colorlinks,final]{hyperref}


\newtheorem{theorem}{Theorem}

\newtheorem{corollary}[theorem]{Corollary}
\newtheorem{lemma}[theorem]{Lemma}
\newtheorem{proposition}[theorem]{Proposition}

\newtheorem{remark}[theorem]{Remark}


\numberwithin{equation}{section}

\newenvironment{proof}[1][\relax]%
  {\paragraph{Proof\ifx#1\relax\else~of #1\fi}}%
  {~\hfill$\square$\par\bigskip}


\newcommand{\calC}{\mathcal{C}}
\newcommand{\calD}{\mathcal{D}}
\newcommand{\calE}{\mathcal{E}}
\newcommand{\calF}{\mathcal{F}}

\newcommand{\calV}{\mathcal{V}}


\newcommand{\bbH}{\mathbb{H}}

\newcommand{\bbR}{\mathbb{R}}

\newcommand{\bbT}{\mathbb{T}}

\newcommand{\bbZ}{\mathbb{Z}}


\newcommand{\rk}[1]{\bgroup\color{red}%
  \par\medskip\hrule\smallskip%
  \noindent\textbf{#1}%
  \par\smallskip\hrule\medskip\egroup}

\title{\bf On  the critical  parameters of the  $q\ge4$ random-cluster
  model on isoradial graphs}
\author{V. Beffara \and H. Duminil-Copin \and S. Smirnov}
\date{\today}

\begin{document}

\maketitle

\begin{abstract}
  The critical  surface for  random-cluster model  with cluster-weight
  $q\ge  4$  on isoradial  graphs  is  identified using  parafermionic
  observables. Correlations are also shown to decay exponentially fast
  in  the  subcritical regime.  While  this  result is  restricted  to
  random-cluster models with  $q\ge 4$, it extends  the recent theorem
  of \cite{BD12} to a large class of planar graphs. In particular, the
  anisotropic random-cluster model  on the square lattice  is shown to
  be  critical if  $\frac{p_vp_h}{(1-p_v)(1-p_h)}=q$, where  $p_v$ and
  $p_h$ denote the horizontal  and vertical edge-weights respectively.
  We also mention consequences for Potts models.
\end{abstract}

\section{Introduction}

\subsection{Motivation}

Random-cluster models  are dependent percolation models  introduced by
Fortuin and  Kasteleyn in 1969  \cite{FK72}. They have been an
important  tool  in  the  study  of  phase  transitions.  Among  other
applications, the spin  correlations of Potts models  get rephrased as
cluster    connectivity    properties    of    their    random-cluster
representations,  which allows  for the  use of  geometric techniques,
thus leading to several important applications. Nevertheless, only few
aspects of the random-cluster models are known in full generality.

The   \emph{random-cluster  model}   on  a   finite  connected   graph
$G=(\calV[G],\calE[G])$ is  a model on  edges of this graph,  each one
being either closed  or open. A configuration can be  seen as a random
graph,  whose vertex  set  is $\calV[G]$  and whose  edge  set is  the
collection  of  all  open  edges;  a  \emph{cluster}  is  a  connected
component for this random graph. The probability of a configuration is
proportional to
$$\prod_{e\text{~open}}p_e\prod_{e\text{~closed}}(1-p_e)\cdot q^{\# \;
  \text{clusters}},$$
where    the   \emph{edge-weights}    $p_e\in   [0,1]$    (for   every
$e\in  \calE[G]$) and  the \emph{cluster-weight}  $q\in(0,\infty)$ are
the parameters  of the  model. Extensive literature  exists concerning
these  models; we  refer the  interested  reader to  the monograph  of
Grimmett~\cite{Gri06} and references therein.

For $q\geq 1$,  the model can be extended  to infinite-volume lattices
where  it  exhibits a  phase  transition.  In  general, there  are  no
conjectures for  the value of  the critical surface,  \emph{i.e.}\ the
set of $(p_e)_{e\in \calE[G]}$ for which the model is critical. In the
case  of  planar  graphs,  there  is  a  connection  (related  to  the
Kramers-Wannier  duality  \cite{KW41a,KW41b}   for  the  Ising  model)
between random-cluster models on a graph and on its dual with the same
cluster-weight  $q$  and   appropriately  related  edge-weights.  This
relation  leads, in  the particular  case of  $\mathbb Z^2$  (which is
isomorphic to its  dual) with $p_e=p$ for every $e$,  to the following
natural prediction:  the critical  point $p_c(q)$ is  the same  as the
so-called   \emph{self-dual   point},   which  has   a   known   value
$\sqrt{q}/(1+\sqrt{q})$. This  was proved recently in  \cite{BD12} for
any  $q\ge 1$  (see also  \cite{DI13}). Furthermore,  the size  of the
cluster at the origin was proved  to have exponential decay tail if
$p<p_c(q)$.

The critical  point was  previously known in  three famous  cases. For
$q=1$,  the  model is  simply  Bernoulli  bond-percolation, proved  by
Kesten \cite{Kes80}  to be  critical at  $p_c(1)=1/2$. For  $q=2$, the
self-dual value corresponds  to the critical temperature  of the Ising
model,  as first  derived by  Onsager \cite{Ons44};  one can  actually
couple realizations of the Ising and  random-cluster models to relate the critical
points of each,  see \cite{Gri06} and references  therein for details.
Finally, for  $q\geq 25.72$, a proof  is known based on  the fact that
the random-cluster model exhibits a  first order phase transition; see
\cite{LMMRS91,LMR86}.

A  general challenge in statistical  physics is  to understand the
universal  behavior,  i.e.\  the  behavior of  a  certain  model,  for
instance the planar random-cluster model, on different graphs. A large
class of graphs, which appeared to  be central in different domains of
planar statistical  physics, is the class  of \emph{isoradial graphs}.
An isoradial  graph is a  planar graph  admitting an embedding  in the
plane in such a way that every face is inscribed in a circle of radius
one.  In  such a  case,  we  will say  that  the  embedding itself  is
isoradial.

\begin{figure}[h]
  \begin{center}
    \includegraphics[width=0.50\textwidth]{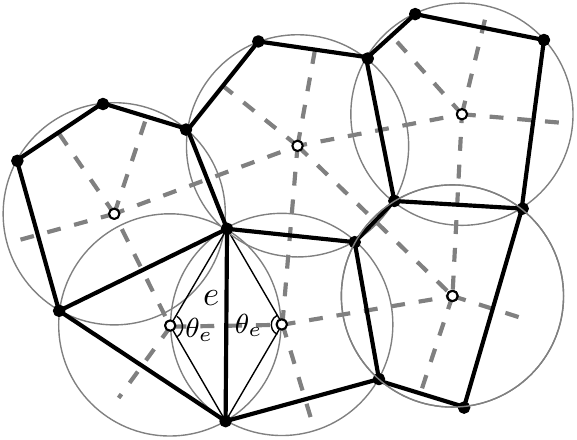}
  \end{center}
  \caption{The black graph is the  isoradial graph. White vertices are
    the  vertices of  the dual  graph. All  faces can  be put  into a
    circumcircle of radius one. Dual  vertices have been drawn in such
    a way that they are the centers of these circles.}
  \label{fig:isoradial}
\end{figure}

Isoradial  graphs were  introduced by  Duffin in  \cite{Duf68} in  the
context  of  discrete  complex   analysis.  The  author  noticed  that
isoradial  embeddings form  a large  class of  embeddings for  which a
discrete notion  of Cauchy-Riemann  equations is  available. Isoradial
graphs first appeared in the physics  literature in the work of Baxter
\cite{Bax78},  where   they  are  called  $Z$-invariant   graphs;  the
so-called  star-triangle transformation  was then  used to  relate the
free energy  of the  eight-vertex and  Ising models  between different
such graphs.  In Baxter's work,  $Z$-invariant graphs are  obtained as
intersections  of lines  in the  plane, and  are not  embedded in  the
isoradial way. The term isoradial was only coined later by Kenyon, who
studied discrete  complex analysis  and the  graph structure  of these
graphs   \cite{Ken02}.  Since   then,  isoradial   graphs  were   used
extensively,   and  we   refer  to   \cite{BTR15,CS08,GM12,Ken02,Mer01}  for
literature on the subject.

In the present article, we study the random-cluster model on isoradial
graphs.

\subsection{Statement of the results}

All the graphs which we will consider in this paper will be assumed to
be \emph{periodic}, in the sense that they will carry an action of the
square lattice  $\bbZ^2$ with  finitely many  orbits; indeed,  this is
often  a crucial  hypothesis  in the  usual  arguments of  statistical
mechanics. Nevertheless, some of our  results extend to a more general
family  of isoradial  graphs,  which  is why  we  first introduce  the
following, weaker condition.

Let $e$  be an edge  of an isoradial  embedding: it subtends  an angle
$\theta_e\in(0,\pi)$ at the center of  the circle corresponding to any
of the  two faces bordered  by $e$; see  Fig.~\ref{fig:isoradial}. Fix
$\theta>0$, and let $G_\infty=(\calV[G_\infty],\calE[G_\infty])$ be an
infinite  isoradial   graph.  The  graph   is  said  to   satisfy  the
\emph{bounded-angle property} if the following condition holds:
\begin{center}
  (BAP$_\theta$) \quad For any $e\in \calE[G_\infty]$,\quad $\theta\le
  \theta_e\le \pi-\theta$.
\end{center}

In order to study the  phase transition, we parametrize random-cluster
measures with cluster-weight  $q\ge 4$ with the help  of an additional
parameter   $\beta>0$.   For   $\beta>0$,   define   the   edge-weight
$p_e(\beta)\in[0,1]$ for $e\in \calE[G_\infty]$ by the formula
$$\frac{p_e(\beta)}{[1-p_e(\beta)]\sqrt q}=\beta
\,\frac{\sinh[\frac{\sigma(\pi-\theta_e)}2]}{\sinh[
  \frac{\sigma\theta_e}2]},$$
where   the   \emph{spin}   $\sigma$   is  given   by   the   relation
$$\cosh \big(\frac  {\sigma\pi} 2\big)  = \frac {\sqrt{q}}  {2}.$$ The
infinite-volume measure  on $G_\infty$  with cluster-weight  $q\ge4 $,
edge-weights  $(p_e(\beta):e\in  \calE[G_\infty])$ and  free  boundary
conditions (see  next section for  a formal definition) is  denoted by
$\phi^0_{G_\infty,\beta,q}$.

\begin{remark}
  In    the    case    of     the    square    lattice,    one    gets
  $p_e(\beta) =  \frac{\beta\sqrt q}{1+\beta\sqrt  q}$. This  does not
  quite  match what  one obtains  in  the setup  of the
  coupling   between  the   Potts  and   random-cluster  models:   the
  bond-parameter corresponding to the $q$-state Potts model at inverse
  temperature $\beta$  is equal to $1-e^{-2\beta}$.  This simply means
  that what we  will denote here by $\beta$ should  not be interpreted
  as  an  inverse temperature  as  such,  but  simply as  a  parameter
  according to which a phase transition can be defined.
\end{remark}

Let $|\cdot|$ be the Euclidean norm.

\begin{theorem}
  \label{exp_decay}
  Let   $q\geq   4$,   $\theta>0$    and   $\beta<1$.   There   exists
  $c=c(\beta,q,\theta)>0$ such  that for any infinite  isoradial graph
  $G_\infty$ satisfying (BAP$_\theta$),
  $$\phi_{G_\infty,\beta,q}^0(u\text{ is connected to } v \text{ by an open path})\leq
  \exp[-c|x-y|],$$for any $u,v\in G_\infty$.
\end{theorem}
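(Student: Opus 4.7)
The plan is to adapt the parafermionic observable of \cite{BD12} to isoradial graphs, exploiting the fact that for $q\ge 4$ the spin $\sigma$ defined by $\cosh(\sigma\pi/2)=\sqrt q/2$ is real. This is the crucial simplification over the $q<4$ regime: the observable takes real values, so its modulus can be compared directly with connection probabilities rather than only with crossing probabilities. The parametrization $p_e(\beta)$ is rigged precisely so that at $\beta=1$ the observable satisfies an exact local conservation law at every medial vertex, regardless of the angle $\theta_e$; lowering $\beta$ below $1$ will turn this into a strict contraction that can be iterated to produce exponential decay.

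The first step is to introduce, on any finite simply-connected isoradial sub-domain $\Omega\subset G_\infty$ with Dobrushin-type boundary conditions (wired on an arc from boundary mid-edges $e_a$ to $e_b$, free on the complementary arc), the observable
\[
  F(e) \;=\; \phi^{\mathrm{Dob}}_{\Omega,\beta,q}\big[\mathbbm 1_{e\in\gamma}\,e^{i\sigma W(e,e_b)}\big],
\]
where $\gamma$ is the random interface separating the two arcs and $W(e,e_b)$ its winding from $e$ to $e_b$. A direct computation, analogous to the square-lattice case but keeping track of the two angles of the rhombus incident to each medial vertex, shows that at $\beta=1$ the three incoming fluxes of $F$ cancel at every interior vertex: this is exactly what the $\sinh$-ratio parametrization is designed to achieve, and is the isoradial analogue of the three-term identity in \cite{BD12}.

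The second step converts this into subcritical exponential decay. For $\beta<1$ the vertex identity becomes a linear inequality of the form $|F(e_{\mathrm{out}})|\le(1-\eta)\bigl(|F(e_{\mathrm{in},1})|+|F(e_{\mathrm{in},2})|\bigr)$ with $\eta=\eta(\beta,q,\theta)>0$, the uniformity in the graph being guaranteed by (BAP$_\theta$) which keeps each $\sinh$-ratio bounded away from $0$ and $\infty$. Summing the inequality across an annulus of width comparable to $n$ separating $u$ from $v$ and telescoping gives that the total flux of $|F|$ through the inner boundary is dominated by $(1-\eta)^{c n}$ times the flux through the outer boundary. Because $\sigma\in\bbR$, every configuration realising $\{u\leftrightarrow v\}$ contributes nonnegatively, with modulus at least $\exp(-C\log|u-v|)$ (the winding along a nearly-straight path is at most logarithmic, and (BAP$_\theta$) bounds graph distance by Euclidean distance), to the inner flux; combining this with the contraction and letting $\Omega\uparrow G_\infty$ under free boundary conditions yields the claimed bound $\phi^0_{G_\infty,\beta,q}(u\leftrightarrow v)\le\exp(-c|u-v|)$.

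The step I expect to be the main obstacle is the uniform vertex inequality for $\beta<1$. On the square lattice every medial vertex is surrounded by congruent squares and the estimate reduces to a single scalar computation; on a general isoradial graph the local relation mixes two angles $\theta_e,\theta_{e'}\in[\theta,\pi-\theta]$ with the real spin $\sigma$ and the detuning factor $\beta$, and one must show that the three-term linear relation between the values of $F$ has a contraction constant $\eta(\beta,q,\theta)>0$ which is uniform in the pair $(\theta_e,\theta_{e'})$. This amounts to a compactness argument on $[\theta,\pi-\theta]^2$ together with a positivity statement about a certain hyperbolic polynomial in $\sigma$; once it is established, the annular summation, the conversion of fluxes into connection probabilities, and the passage to the infinite-volume free measure are standard consequences of the monotonicity properties of the random-cluster measure.
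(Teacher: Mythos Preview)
Your overall strategy --- define a parafermionic observable, show it satisfies an exact local identity at $\beta=1$, show the identity becomes a strict contraction for $\beta<1$, and iterate over concentric shells --- is the same as the paper's. But several concrete points are off, and one of them is a genuine gap.

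First, two small corrections. The observable should be built with $e^{\sigma W}$, not $e^{i\sigma W}$: for $q\ge4$ the spin $\sigma$ is real, and the whole point (which you state correctly in words) is that $F$ is then nonnegative. With the factor $i$ it is unimodular and your positivity argument collapses. Second, the local relation is a \emph{four}-term identity around each rhombic face of the diamond graph, $F(B)+F(D)=\Lambda_e(\beta x_e)\,[F(A)+F(C)]$, not a three-term vertex relation; and the paper does not extract a pointwise contraction $|F(e_{\mathrm{out}})|\le(1-\eta)\sum|F(e_{\mathrm{in}})|$ from it. Instead one sums the identity over all faces in a region $E$: each interior edge picks up the coefficient $\Lambda_e(\beta x_e)-1>0$ (this is where $\beta<1$ and (BAP$_\theta$) enter), boundary edges pick up bounded coefficients, and one obtains the integrated inequality $\sum_{e\in E}F(e)\le C_1\sum_{e\in\partial_{\mathrm e}E}F(e)$, which is what gets iterated.

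The real gap is the step you treat as routine: converting observable decay into a bound on $\phi^0(u\leftrightarrow v)$. Your claim that ``the winding along a nearly-straight path is at most logarithmic'' is not usable here. The winding $W_\gamma(e,e_b)$ is the winding of the \emph{random exploration path}, which can be arbitrarily large; there is no a priori control on it, and the factor $e^{\sigma W}$ can be exponentially small just as well as exponentially large. The paper resolves this in two moves you are missing. For the lower bound linking $F$ to connectivity, one introduces the companion observable $\tilde F$ built with $e^{-\sigma W}$ and uses the trivial inequality $e^{\sigma W}+e^{-\sigma W}\ge2$ to get $F(e)+\tilde F(e)\ge 2\,\phi(e\in\gamma)$, which kills the winding on that side. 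For the upper bound on the boundary flux, one uses that on boundary edges the winding is \emph{deterministic} (the path cannot wind around a boundary edge), and then works in specific rectangular domains $[0,n]\times[-m,m]$ where this deterministic boundary winding is uniformly bounded; this is Proposition~\ref{prop:14}. The combination yields exponential decay of $\phi^0(0\leftrightarrow v)$ first in a \emph{half-plane} (Lemma~\ref{lem:proof}), and a separate conditioning argument on the exterior-most dual circuit around the cluster of the origin is then needed to transfer this to the full plane. None of this structure appears in your sketch, and the step you flag as the main obstacle (uniformity of the contraction constant over $[\theta,\pi-\theta]$) is in fact the easy part.
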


This theorem  implies that the edge-weights  $p_e=p_e(1)$ are critical
in the following sense.

\begin{theorem}
  \label{main_theorem}
  Let $q\geq  4$, $\theta>0$. For any  \emph{periodic} isoradial graph
  $G_\infty$:
  \begin{enumerate}
  \item The infinite-volume measure is unique whenever $\beta\ne 1$.
  \item  For  $\beta<1$, there  is  $\phi^0_{G_\infty,\beta,q}$-almost
    surely no infinite-cluster.
  \item  For  $\beta>1$, there  is  $\phi^0_{G_\infty,\beta,q}$-almost
    surely a unique infinite-cluster.
  \end{enumerate}
\end{theorem}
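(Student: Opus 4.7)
The plan is to derive all three items from Theorem~\ref{exp_decay} combined with planar FK duality between $G_\infty$ and its dual $G_\infty^*$. As $G_\infty$ is periodic isoradial satisfying (BAP$_\theta$), so is $G_\infty^*$ (the dual of an edge of angle $\theta_e$ has angle $\pi-\theta_e$), hence Theorem~\ref{exp_decay} applies to both graphs. A short computation on the parametrization $\beta\mapsto p_e(\beta)$ shows that finite-volume FK duality sends $p_e(\beta)$ on $G_\infty$ to $p_{e^*}(1/\beta)$ on $G_\infty^*$, so in the infinite-volume limit
\[
\phi^1_{G_\infty,\beta,q}\text{ is dual to }\phi^0_{G_\infty^*,1/\beta,q}.
\]

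Item~(2) is immediate: a union bound over the polynomially growing graph sphere of radius $n$ around any vertex $u$, combined with the exponential estimate of Theorem~\ref{exp_decay} and Borel--Cantelli, shows that the cluster of $u$ is finite $\phi^0$-almost surely.

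The main obstacle is item~(1). By the displayed duality, uniqueness at $\beta$ on $G_\infty$ is equivalent to uniqueness at $1/\beta$ on $G_\infty^*$, so it suffices to treat $\beta<1$. The classical random-cluster machinery on periodic graphs provides two ingredients: first, the exponential decay of $\phi^0$ throughout $(0,1)$ given by Theorem~\ref{exp_decay}; second, convexity of the free energy $\beta\mapsto f(\beta,q)$ in $\log\beta$ (its left/right derivatives being the edge-densities under $\phi^0$ and $\phi^1$), which forces $\phi^0_{G_\infty,\beta,q}=\phi^1_{G_\infty,\beta,q}$ outside an at most countable set $D\subset(0,\infty)$. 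Combining these with stochastic monotonicity in $\beta$ and the right-continuity of $\phi^1_{G_\infty,\cdot,q}$ (obtained as a decreasing limit of finite-volume wired measures that are continuous in $\beta$), one argues that $\phi^1_{G_\infty,\beta,q}$ shares with $\phi^0_{G_\infty,\beta,q}$ the absence of an infinite cluster throughout $(0,1)$, and then standard FK theory delivers $\phi^0=\phi^1$ on $(0,1)$. The delicate step is checking that these classical thermodynamic and continuity arguments, well understood on $\bbZ^2$, extend to arbitrary periodic isoradial graphs; periodicity and (BAP$_\theta$) are precisely what makes the required estimates uniform.

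With items~(1) and~(2) in hand, item~(3) follows directly. For $\beta>1$, Theorem~\ref{exp_decay} applied to $G_\infty^*$ at parameter $1/\beta<1$ yields exponential decay of $\phi^0_{G_\infty^*,1/\beta,q}$, and hence no infinite cluster in this dual measure; by the displayed planar duality this forces the primal measure $\phi^1_{G_\infty,\beta,q}$ to have an almost-sure infinite cluster. Uniqueness from~(1) transfers this conclusion to $\phi^0_{G_\infty,\beta,q}$, and uniqueness of the infinite cluster is then the Burton--Keane theorem, applied using translation invariance from periodicity and the finite-energy property of the random-cluster model.
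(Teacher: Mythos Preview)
Your proposal is correct and follows essentially the same route as the paper: item~(2) from Theorem~\ref{exp_decay}, item~(1) via the countable non-uniqueness set of Proposition~\ref{uniqueness} combined with monotonicity in $\beta$ to show $\phi^1$ has no infinite cluster on $(0,1)$ (then citing the periodic-graph analogue of \cite[Theorem~(5.33)]{Gri06}), and item~(3) by duality plus item~(1). One minor simplification: the right-continuity of $\phi^1$ you invoke is correct but unnecessary, since monotonicity alone already gives $\phi^1_\beta(0\leftrightarrow\infty)\le\phi^1_{\beta'}(0\leftrightarrow\infty)=\phi^0_{\beta'}(0\leftrightarrow\infty)=0$ for any single $\beta'\in(\beta,1)\setminus D$ --- this is exactly how the paper proceeds.
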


In fact, what  we will prove is the following,  slightly weaker result
in  the  more general  setup  of  graph satisfying  the  bounded-angle
property:

\begin{theorem}
  \label{main_theorem_weak}
  Let $q\geq 4$, $\theta>0$. For any infinite isoradial graph
  $G_\infty$ satisfying (BAP$_\theta$):
  \begin{enumerate}
  \item  For  $\beta<1$, there  is  $\phi^0_{G_\infty,\beta,q}$-almost
    surely no infinite-cluster.
  \item  For  $\beta>1$, there  is  $\phi^1_{G_\infty,\beta,q}$-almost
    surely a unique infinite-cluster.
  \end{enumerate}
\end{theorem}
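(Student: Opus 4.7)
The plan is to derive (1) directly from Theorem~\ref{exp_decay} and to obtain (2) by applying Theorem~\ref{exp_decay} to the dual graph and then invoking planar duality.

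For (1), given $\beta<1$, Theorem~\ref{exp_decay} yields $\phi^0_{G_\infty,\beta,q}(u \leftrightarrow v) \le e^{-c|u-v|}$ for some $c=c(\beta,q,\theta)>0$. Since (BAP$_\theta$) gives a bound on the edge lengths and degrees of $G_\infty$, the number of vertices at graph-distance $n$ from $u$ grows at most polynomially, so a union bound gives $\phi^0(u \leftrightarrow \partial \Lambda_n(u)) \to 0$, hence $\phi^0(u \leftrightarrow \infty)=0$. A countable union over vertices $u$ then shows that there is no infinite cluster $\phi^0_{G_\infty,\beta,q}$-almost surely.

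For (2), I would first observe that the dual graph $G_\infty^*$, with dual vertices placed at the circumcenters of the primal faces, is itself isoradial: the angle subtended at a dual vertex by a dual edge $e^*$ is $\pi-\theta_e$, so $G_\infty^*$ also satisfies (BAP$_\theta$). Combining the standard random-cluster duality relation $\tfrac{p_e}{1-p_e}\cdot\tfrac{p_{e^*}^*}{1-p_{e^*}^*}=q$ with the defining formula for $p_e(\beta)$, one checks that the dual of the weight $p_e(\beta)$ is exactly $p_{e^*}(1/\beta)$ on $G_\infty^*$. Hence duality maps $\phi^1_{G_\infty,\beta,q}$ to $\phi^0_{G_\infty^*,1/\beta,q}$. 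Since $1/\beta<1$, Theorem~\ref{exp_decay} supplies exponential decay of dual two-point functions on $G_\infty^*$, and a union bound gives that the probability of a dual top-to-bottom crossing of any $n\times n$ box is bounded by $Cn^2 e^{-cn}$. Using finite-volume duality together with the stochastic domination $\phi^0_{\Lambda^*,1/\beta,q}\preceq \phi^0_{G_\infty^*,1/\beta,q}$, this translates into $\phi^1$-probabilities of primal left–right crossings of the corresponding box tending to $1$. A standard FKG-based gluing of overlapping rectangle crossings then produces an infinite primal cluster with positive probability, and hence $\phi^1$-almost surely.

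Uniqueness of the infinite cluster will follow from the Burton–Keane trifurcation argument, using that $(p_e(\beta))_{e\in\calE[G_\infty]}$ is bounded away from $0$ and $1$ by (BAP$_\theta$) so that $\phi^1$ has the finite-energy property. The main obstacle I anticipate is the last step of existence: turning exponential decay of dual two-point functions into the existence of an infinite primal cluster requires the FKG gluing of crossings to overcome the polynomial loss $n^2$ in the union bound, and the duality between primal wired and dual free boundary conditions in infinite volume must be set up carefully (this is where the wired measure $\phi^1$, rather than $\phi^0$, is crucial in the statement).
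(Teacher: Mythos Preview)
Your treatment of part~(1) coincides with the paper's.

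For the existence half of part~(2), the paper takes a shorter route than your crossing-and-gluing scheme. Rather than building an infinite path by FKG-gluing rectangle crossings, it applies Borel--Cantelli directly to the events $A(y)=\{$there is a dual-open circuit through $y$ surrounding the origin$\}$. Any such circuit forces $y$ to be dual-connected to Euclidean distance at least $|y|$, so Theorem~\ref{exp_decay} on $G_\infty^*$ yields $\phi^1_{G_\infty,\beta,q}(A(y))\le C|y|^2e^{-c|y|}$, which is summable over $y\in G_\infty^*$. Hence $\phi^1$-almost surely only finitely many $A(y)$ occur, i.e.\ there is no sequence of dual-open circuits around $0$ going to infinity; but the absence of an infinite primal cluster would force exactly such a sequence (the outer dual boundary of the union of primal clusters meeting $B_0(n)$ is a dual-open circuit of radius at least $n$). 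This bypasses both the finite-volume crossing/duality bookkeeping and your step ``positive probability, hence $\phi^1$-almost surely,'' which as written needs an extra ingredient --- either tail-triviality of $\phi^1$, or, more in the spirit of your own estimates, a direct Borel--Cantelli on the non-crossing events rather than an FKG product bound.

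Your uniqueness argument has a genuine gap. The Burton--Keane trifurcation argument requires the measure to be invariant under a transitive amenable group of automorphisms, so that the \emph{density} of trifurcation points can be compared to the isoperimetric ratio; a general isoradial graph satisfying only (BAP$_\theta$) need not admit any nontrivial automorphism, and uniform finite energy alone does not suffice. The appropriate argument here is planar and reuses what you have already established: if there were two disjoint infinite primal clusters, a standard topological argument produces an infinite dual-open cluster separating them, contradicting part~(1) applied to $G_\infty^*$ (no infinite cluster under $\phi^0_{G_\infty^*,1/\beta,q}$). The paper's own proof, incidentally, stops after establishing existence and does not spell out the uniqueness step either.
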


It will be shown  that in the periodic case, or in  any case for which
the set $\calD_{q,G_\infty}$ of $\beta$  such that there are more than
one  infinite-volume random-cluster  measure  is  of everywhere  dense
complement       (see       Proposition~\ref{uniqueness}       below),
Theorem~\ref{main_theorem_weak}   implies  Theorem~\ref{main_theorem}.
Since this will be the only place where periodicity will be used, most
statements  of this  article  are  phrased (and  proved)  in the  more
general bounded-angle setup.

\bigskip

The theorems  were previously known  for two specific choices  of $q$:
when  $q=2$, the  model was  proved to  be conformally  invariant when
$\beta=1$ in  \cite{CS09}, thus  implying the different  theorems; for
percolation   (i.e.\  the   case   $q=1$),   Manolescu  and   Grimmett
\cite{GM11,Gm11b,GM12}  showed the  corresponding statements  (these papers contain more delicate properties of the critical phase $\beta=1$ as well).
\bigskip

The  main  tools  used  in  the  proofs  are  the so-called \emph{parafermionic
  observables}.   These   observables   were   first   introduced   in
\cite{Smi10} for critical random-cluster  models on $\mathbb Z^2$ with
parameter   $q\in[0,4]$,   as   (anti)-holomorphic   parafermions   of
fractional spin  $\sigma\in[0,1]$, given by certain  vertex operators.
So far,  holomorphicity was  rigorously proved  only for  $q=2$ (which
corresponds to  the Ising model)  and probably holds exactly  only for
this value.  In this case,  the observable  can be used  to understand
many properties  on the model,  including conformal invariance  of the
observable    \cite{CS09,Smi10}    and   loops    \cite{CDHKS12,HK11},
correlations   \cite{CHI12,CI12,Hon10}   and  crossing   probabilities
\cite{BDH12,CDH12,DHN10}. Inspired by  similar considerations, one can
also   compute  the   critical  surface   of  any   bi-periodic  graph
\cite{Li12,CD12}.

Our proof uses an appropriate generalization of these vertex operators
to   random-cluster  models   with  $q\geq   4$.  Even   though  exact
holomorphicity  is not  available, the  observable can  still be  used
efficiently. Interestingly, the spin variable becomes purely imaginary
and does  not possess  an immediate physical  interpretation. However,
this allows us to write better  estimates even in the absence of exact
holomorphicity.   It  also   simplifies  the   relation  between   our
observables and the connectivity properties of the model.

For $\beta\ne  1$, we prove  that the observables behave  like massive
harmonic functions  and decay exponentially  fast with respect  to the
distance to the  boundary of the domain.  Translated into connectivity
properties,  this implies  the sharpness  of the  phase transition  at
$\beta=1$.

The fact that isoradial graphs  are perfect candidate for constructing
parafermionic observables is reminiscent from both the works of Duffin
and Baxter.  Indeed, these works  highlighted the fact  that isoradial
graphs constitute a  general class of graph on  which discrete complex
analysis and statistical physics can be studied precisely.

\paragraph{Application to inhomogeneous models}

The inhomogeneous random-cluster models  on the square, the triangular
and the  hexagonal lattices  can be seen  as random-cluster  models on
periodic   isoradial   graphs.  Theorem~\ref{main_theorem}   therefore
implies the following:

\begin{corollary}
  \label{square}
  The inhomogeneous random-cluster model with cluster-weight $q\ge 4$ on the square, triangular and
  hexagonal lattices $\bbZ^2$, $\bbT$ and $\bbH$ have the following
  critical surfaces:
  \begin{align*}
    &\text{~on~}\bbZ^2\quad \frac{p_1}{1-p_1}\frac{p_2}{1-p_2}=q,\\
    &\text{~on~}\bbT\quad
    \frac{p_1}{1-p_1}\frac{p_2}{1-p_2}\frac{p_3}{1-p_3} +
    \frac{p_1}{1-p_1}\frac{p_2}{1-p_2} +
    \frac{p_1}{1-p_1}\frac{p_3}{1-p_3} +
    \frac{p_2}{1-p_2}\frac{p_3}{1-p_3}=q,\\
    &\text{~on~}\bbH\quad
    \frac{p_1}{1-p_1}\frac{p_2}{1-p_2}\frac{p_3}{1-p_3} =
    q\frac{p_1}{1-p_1}+q\frac{p_2}{1-p_2}+q\frac{p_3}{1-p_3}+q^2,
  \end{align*}
  where $p_1,p_2$ (resp. $p_1,p_2,p_3$) are the edge-weights of the
  different types of edges.
\end{corollary}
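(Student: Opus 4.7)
The plan is to realize each of the three lattices as a periodic isoradial graph by choosing angles for the edge orbits, apply Theorem~\ref{main_theorem} to identify the critical surface as the image of the map $\theta_e \mapsto p_e(1)$, and then verify that this image is cut out by the stated algebraic equation. The angle constraints come from the requirement that each face be inscribed in a circle of radius one: on $\bbZ^2$ the two edge types satisfy $\theta_1+\theta_2=\pi$; on $\bbT$ the three edge types satisfy $\theta_1+\theta_2+\theta_3=2\pi$ (the central angles of an inscribed triangle); and on $\bbH$ they satisfy $\theta_1+\theta_2+\theta_3=\pi$ (since each hexagonal face sees two edges of each type). As the angles range over the interior of the respective simplices, the bounded-angle property is satisfied uniformly, so Theorem~\ref{main_theorem} applies and the critical point is exactly $\beta=1$.

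For $\bbZ^2$ the computation is immediate: since $\pi-\theta_1=\theta_2$ and $\pi-\theta_2=\theta_1$, the two ratios $\sinh[\sigma(\pi-\theta_i)/2]/\sinh[\sigma\theta_i/2]$ are reciprocals of each other, so multiplying the two defining relations for $p_1(1)$ and $p_2(1)$ cancels the sines and produces $\frac{p_1}{1-p_1}\frac{p_2}{1-p_2}=q$. Conversely, every pair $(p_1,p_2)$ on this curve is realized by a unique angle $\theta_1\in(0,\pi)$, so the critical surface is exactly as claimed.

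For $\bbT$, write $\alpha_i=\sigma\theta_i/2$, $\gamma=\sigma\pi/2$ (so that $\cosh\gamma=\sqrt q/2$ and $\alpha_1+\alpha_2+\alpha_3=2\gamma$), and set $x_i=p_i/(1-p_i)=2\cosh\gamma\cdot\sinh(\gamma-\alpha_i)/\sinh\alpha_i$. The claim reduces to the hyperbolic identity
$$x_1x_2x_3+x_1x_2+x_2x_3+x_1x_3=4\cosh^2\gamma$$
under the constraint $\alpha_1+\alpha_2+\alpha_3=2\gamma$. This is, in disguise, the Baxter star-triangle relation for the random-cluster model written in $Z$-invariant hyperbolic form; I would verify it directly by clearing denominators and repeatedly applying the product-to-sum formula together with the identity $\sinh(\gamma-\alpha_i)=\sinh(\alpha_j+\alpha_k-\gamma)$ that comes from the angle constraint. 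This algebraic verification is the one nontrivial computation of the corollary and constitutes the main obstacle. Conversely, every triple $(p_1,p_2,p_3)$ satisfying the algebraic relation lies in the image of the parametrization, which can be checked by a degree/monotonicity argument in one of the angles with the remaining two constrained.

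The hexagonal case then follows from $\bbT$ by planar duality, without a separate trigonometric computation. The dual of a periodic isoradial embedding of $\bbT$ is a periodic isoradial embedding of $\bbH$ with angles $\pi-\theta_e$ (hence the same bounded-angle property), and the random-cluster self-dual relation at $\beta=1$ maps the weight $x_e=p_e/(1-p_e)$ to $x_e^\ast=q/x_e$. Dividing the triangular identity by $x_1x_2x_3$, multiplying through by $q^2$, and substituting $x_i^\ast=q/x_i$ produces exactly
$$x_1^\ast x_2^\ast x_3^\ast = q(x_1^\ast+x_2^\ast+x_3^\ast)+q^2,$$
which is the claimed equation on $\bbH$. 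Together with Theorem~\ref{main_theorem}, this closes the three cases.
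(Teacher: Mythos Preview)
Your approach is essentially the same as the paper's: realize each lattice as a periodic isoradial graph, identify the angle constraint coming from the inscribed-circle condition, and invoke Theorem~\ref{main_theorem}. The paper only writes out the square-lattice case and dismisses $\bbT$ and $\bbH$ with ``being the same''; you are more thorough, and your duality shortcut for $\bbH$ (substituting $x_i^\ast=q/x_i$ into the $\bbT$ relation) is a clean way to avoid a second trigonometric computation.

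One point deserves a bit more care. You show that the image of $\theta\mapsto p_e(1)$ coincides with the stated algebraic surface, and hence that every point on the surface is critical. But the corollary asserts more: that the critical surface is \emph{exactly} this locus, i.e.\ that points off it are not critical. The paper handles this (for $\bbZ^2$) by parametrizing the entire $(p_1,p_2)$-space via $\beta$: given any $(p_1,p_2)$ one defines $\beta$ through $\frac{p_1}{1-p_1}\frac{p_2}{1-p_2}=\beta q$, chooses the isoradial embedding so that the resulting rescaled weights are critical, and then reads off sub-/super-criticality from $\beta\lessgtr 1$ via Theorem~\ref{main_theorem}. Your write-up should include the analogous step (either this $\beta$-parametrization, or a monotonicity argument moving an off-surface point onto the surface along one coordinate), otherwise you have only established one containment.
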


For percolation, Corollary~\ref{square} was predicted in \cite{ES63}
and proved in \cite[Section 3.4]{Kes82} for the case of the square
lattice and \cite[Section 11.9]{Gri99} for the case of triangular and
hexagonal lattices.

Let  us also  mention that  the  critical parameter  of the  continuum
random-cluster model  can be computed  using the  fact that it  is the
limit  of inhomogeneous  random-cluster models  on the  square lattice
with  $(p_1,p_2)\rightarrow (0,1)$.  We  refer to  \cite{GOS08} for  a
precise definition of the models, which are connected to the one dimensional Quantum Potts
model.  The parameters  of  the  models are  usually  referred to  as
$\lambda,\delta>0$, where  $\lambda$ and $\delta$ are  the intensities
of the  Poisson Point Process  of bridges and deaths  respectively. In
such case, Theorem~\ref{main_theorem} implies  that the critical point
is given by $\lambda/\delta=q$ for $q\ge 4$.


\paragraph{Application to Potts models}

Potts  models   on  $G$  with   $q$  colors  and   coupling  constants
$(J_e:e\in \calE[G])$  can be  coupled with random-cluster  model with
cluster-weight   $q$  and   edge-weights   $p_e=1-\exp[-J_e]$.  As   a
consequence, Theorem~\ref{main_theorem} shows the following:

\begin{corollary}
  \label{Potts}
  Let $q\ge  4$ and  $\theta>0$. For  any infinite  periodic isoradial
  graph  $G_\infty$, the  $q$-state Potts  models on  isoradial graphs
  with   coupling constants  $-\log[1-p_e(1)],   e\in  \calE[G_\infty]$   is
  critical.
\end{corollary}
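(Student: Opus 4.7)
The plan is to invoke the Edwards--Sokal coupling between the $q$-state Potts model with coupling constants $(J_e)_{e\in\calE[G_\infty]}$ and the random-cluster model with cluster-weight $q$ and edge-weights $p_e=1-\exp(-J_e)$, and then to translate Theorem~\ref{main_theorem} through this coupling. Substituting $J_e=-\log[1-p_e(1)]$ gives $p_e=p_e(1)$, so the Potts model in the corollary is coupled exactly to the random-cluster measure at $\beta=1$.

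To make the word \emph{critical} precise, I would embed the target model into the one-parameter family $J_e(\beta)=-\log[1-p_e(\beta)]$, which is strictly increasing in $\beta$ by the defining relation of $p_e(\beta)$. The standard Edwards--Sokal identity, under matching boundary conditions,
$$\mu^{0/1}_{G_\infty,\beta}(\sigma_u=\sigma_v)-\frac{1}{q}=\Big(1-\frac{1}{q}\Big)\,\phi^{0/1}_{G_\infty,\beta,q}(u\leftrightarrow v),$$
identifies the Potts two-point function (minus $1/q$) with the connection probability in the coupled random-cluster model, so that the spontaneous magnetization under monochromatic boundary conditions is positive iff the wired random-cluster measure exhibits an infinite cluster. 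Combining this with Theorem~\ref{exp_decay} (exponential decay of Potts correlations for $\beta<1$) and Theorem~\ref{main_theorem} (almost-sure existence of a unique infinite cluster, hence strictly positive spontaneous magnetization, for $\beta>1$) will show that the phase transition sits exactly at $\beta=1$, and therefore the Potts model with coupling constants $-\log[1-p_e(1)]$ is critical along this monotone family.

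The one step requiring some care is the transfer of the $\phi^0$-statements of Theorem~\ref{main_theorem} to $\phi^1$ in the supercritical regime, which is what is needed to produce an infinite cluster under monochromatic Potts boundary conditions. This will follow from monotonicity in boundary conditions together with Theorem~\ref{main_theorem}(1), which, in the periodic setting, gives uniqueness of the infinite-volume random-cluster measure for every $\beta\neq 1$. No real conceptual obstacle arises: the corollary is essentially a dictionary translation of the random-cluster result into Potts language.
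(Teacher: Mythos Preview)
Your proof is correct and follows the same approach as the paper: both invoke the Edwards--Sokal (classical) coupling between the $q$-state Potts model and the random-cluster model to transfer Theorem~\ref{main_theorem} into the Potts setting. The paper's own proof is a single sentence to this effect; your version simply spells out the dictionary (the correlation--connection identity, the embedding into the one-parameter family $J_e(\beta)$, and the use of uniqueness at $\beta\neq1$ to pass between boundary conditions), but there is no substantive difference in strategy.
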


\subsection{Open questions}

Exact computations  can be performed  for the random-cluster  model at
criticality (see \cite{Bax89}), and despite  the fact that they do not
lead to  fully rigorous mathematical  proofs, they do  provide insight
and further  conjectures on the behavior  of these models at  and near
criticality. Let us mention a few open questions.

\paragraph{1.} Parafermionic observables were  used when $1\le q\le 4$
to prove  that the phase transition  is continuous \cite{Dum12,DST13}.
Moreover,  it  is conjectured  that  among  all random-cluster  models
defined on planar lattices, the phase  transition is of first order if
and only  if $q$ is  greater than 4. Interestingly,  the parafermionic
observable exhibits a very different  behavior for $q\le 4$ and $q>4$,
which raises the following question.

\medbreak  {\bf  Question  1.}  Can  the change  of  behavior  of  the
observable  be related  to  the  change of  critical  behavior of  the
random-cluster model?

\paragraph{2.} In  the work  \cite{BD12}, the  critical value  for the
random-cluster model on the isotropic square lattice has been computed
for any $q\ge  1$. Parafermionic observables on  isoradial graphs also
make  sense for  $q<1$ (see  \cite{Dum12,Smi10}), which  leads to  the
following   question.   \medbreak   \textbf{Question   2.}   Use   the
parafermionic observable  to compute  the critical point  on isoradial
graphs (or simply on $\bbZ^2$) for any $q\in(0,4)$?

\paragraph{3.}  More generally,  parafermionic  observables have  been
found  in  a  number  of   critical  planar  statistical  models,  see
\cite{DS11,Smi10b} and  references therein.  They have  sometimes been
used  to  derive  information  on the  models  (see  \cite{Dum12}  for
random-cluster models and \cite{BBDGG12,BGG12b,BGG12a,DS12,EGGL12} for
$O(n)$-models and  self-avoiding walks). There are many potential applications of these observables which deserve a closer look and we refer to the literature for additional information on these questions.
\paragraph{4.}  As mentioned  earlier,  the  fact that  random-cluster
models on $\bbZ^2$ undergo a first order phase transition is currently
known for $q\geq 25.72$; see \cite{LMMRS91,LMR86}. The main ingredient
is    the    Pirogov-Sinai    theory,    which    shows    that    the
$\phi_{\bbZ^2,1,q}^0$-probability  that  the  origin is  connected  to
distance $n$ decays exponentially fast in $n$. Interestingly, Grimmett
and  Manolescu \cite{GM12}  used the  star-triangle transformation  to
relate  probabilities   of  being   connected  to  distance   $n$  for
percolation  on different  isoradial  graphs.  From \cite{Bax78},  the
star-triangle   transformation  is   known  to   extend  to   critical
random-cluster models  and it seems  plausible that the  techniques in
\cite{GM12}  can be  combined  to results  in \cite{LMMRS91,LMR86}  to
prove  that   the  $\phi^0_{G_\infty,1,q}$-probability  random-cluster
models  that   the  origin  is   connected  to  distance   $n$  decays
exponentially fast in $n$ whenever  $q\ge 25.72$. This would show some
kind of universal behavior: first  order phase transition is common to
any random-cluster model with large enough cluster-weight on isoradial
graphs.  Note  that Pirogov-Sinai  theory  extends  partially to  this
context (although  likely with different  bounds due to the  fact that
the graphs involved would have different combinatorics).

\medbreak \textbf{Question 3.} Show  that random-cluster models on any
isoradial graph  undergo a  first order phase  transition when  $q$ is
large enough.

\paragraph{5.}  Let  us  conclude  with   a  pair  of  more  technical
questions:  How  to  release   the  periodicity  assumption. For instance, how to show
Proposition~\ref{uniqueness} for isoradial  graphs satisfying only the
bounded-angle property? Can the  results be extended to (non-periodic)
isoradial  graphs  which  do   \emph{not}  satisfy  the  bounded-angle
property?

\paragraph{Organization of  the paper.}  Section~\ref{sec:basic} gives
an overview  of probabilistic properties of  the random-cluster model.
It  also introduces  the observable.  Section~\ref{sec:representation}
contains  a derivation  of a  representation formula,  similar to  the
formula for massive harmonic functions,  which is then used to provide
bounds  on the  observable. Section~\ref{sec:proof}  then contain  the
proof  of  Theorem~\ref{exp_decay}  and  Section~\ref{sec:proof2}  the
proofs of Theorem~\ref{main_theorem} and its corollaries.

\section{Basic features of the model}
\label{sec:basic}

We start with an introduction  to the basic features of random-cluster
models.  Details   and  proofs  can   be  found  in   Grimmett's  book
\cite{Gri06}.

\paragraph{Isoradial graphs}

As      mentioned     earlier,      an     \emph{isoradial      graph}
$G=(\calV[G],\calE[G])$ is  a planar  graph admitting an  embedding in
the plane in  such a way that  every face is inscribed in  a circle of
radius one. In such case, we will say that the embedding is isoradial.
For   the   isoradial  embedding,   we   construct   the  dual   graph
$G^*=(\calV[G^*],\calE[G^*])$ as follows:  $\calV[G^*]$ is composed of
all the  centers of  circumcircles of faces  of $G$.  By construction,
every face of  $G$ is associated to a dual  vertex. Then, $\calE[G^*]$
is the  set of edges  between dual vertices corresponding  to adjacent
faces.  Edges of  $\calE[G^*]$ are  in one-to-one  correspondence with
edges of  $\calE[G]$. We  denote the  dual edge  associated to  $e$ by
$e^*$. \medbreak  From now on, we  work only on an  infinite isoradial
graph $G_\infty$ embedded in the isoradial way. Note that the graph is
not a priori periodic.

\paragraph{Definition of the random-cluster model.}

The random-cluster  measure can be  defined on any graph.  However, we
will restrict  ourselves in this  article to the graph  $G_\infty$ and
its connected finite subgraphs.  Let $G=(\calV[G],\calE[G])$ be such a
subgraph.  We  denote by  $\partial  G$  the vertex-boundary  of  $G$,
\emph{i.e.} the  set of sites of  $G$ linked by  an edge to a  site of
$G_\infty\setminus G$.

A \emph{configuration}  $\omega$ on $G$  is a random subgraph  of $G$
having vertex set  $\calV[G]$ and edge set included  in $\calE[G]$. We
will  call the  edges belonging  to $\omega$  \emph{open}, the  others
\emph{closed}. Two sites $u$ and  $v$ are said to be \emph{connected},
if there is an \emph{open path} --- i.e. a path composed of open edges only
---    connecting    them.    The   previous    event    is    denoted by
$u\longleftrightarrow     v$      (we     extend      the     notation
$U\longleftrightarrow V$ to  the event that there exists  an open path
from a vertex of the set $U$ to  a vertex of the set $V$). The connected components of
$\omega$ will be called \emph{clusters}.

A set $\xi$  of \emph{boundary conditions} is given by  a partition of
$\partial G$.  The graph obtained  from the configuration  $\omega$ by
identifying  (or  \emph{wiring}) the  vertices  in  $\partial G$  that
belong to the same component of $\xi$ is denoted by $\omega \cup \xi$.
Boundary conditions  should be  understood as  encoding how  sites are
connected  outside  of  $G$.  Let $k(\omega,\xi)$  be  the  number  of
connected  components  of  $\omega\cup\xi$.  The  probability  measure
$\phi^{\xi}_{G,{\bf  p},q}$ of  the random-cluster  model on  $G$ with
parameters   ${\bf    p}=(p_e:e\in   \calE[G])\in   [0,1]^{\calE[G]}$,
$q\in(0,\infty)$ and boundary conditions $\xi$ is defined by
\begin{equation}
  \label{probconf}
  \phi_{G,{\bf p},q}^{\xi} (\left\{\omega\right\}) =
  \frac {\displaystyle \prod_{e\in \omega}p_e\cdot \prod_{e\notin \omega}(1-p_e)
    \cdot q^{k(\omega,\xi)}}
  {Z_{G,{\bf p},q}^{\xi}},
\end{equation}
for any subgraph  $\omega$ of $G$, where $Z_{G,{\bf  p},q}^{\xi}$ is a
normalizing  constant referred  to as  the \emph{partition  function}.
When there  is no possible  confusion, we  will drop the  reference to
parameters in the notation.

\paragraph{Three specific boundary conditions}

Three boundary conditions will play a special role in our study:
\begin{enumerate}
\item  \emph{Free boundary  conditions}  are  the boundary  conditions
  obtained by  the absence  of wiring  between boundary  vertices. The
  corresponding measure is denoted by $\phi^0_{G,{\bf p},q}$.
\item  \emph{Wired boundary  conditions} are  the boundary  conditions
  obtained  by  wiring  every  boundary  vertices.  The  corresponding
  measure is denoted by $\phi^1_{G,{\bf p},q}$.
\item  Assume  that  $\partial  G$   is  a  self-avoiding  polygon  in
  $G_\infty$, and  let $a$ and $b$  be two sites of  $\partial G$. The
  triple $(G,a,b)$ is called  a \emph{Dobrushin domain}. Orienting its
  boundary  counterclockwise   defines  two  oriented   boundary  arcs
  $\partial_{ab}$  and $\partial_{ba}$;  the \emph{Dobrushin  boundary
    conditions} are defined to be free on $\partial_{ab}$ (there is no
  wiring between  these sites) and  wired on $\partial_{ba}$  (all the
  boundary sites are  wired together). We will refer to  those arcs as
  the free and the wired  arcs, respectively. The measure associated to
  these     boundary     conditions     will     be     denoted     by
  $\phi_{G,{\bf  p},q}^{a,b}$.   We  will  often  use   the  dual  arc
  $\partial^*_{ab}$   adjacent    to   $\partial_{ab}$    instead   of
  $\partial_{ab}$. See Fig.~\ref{fig:diamond_lattice}.
\end{enumerate}

\begin{remark}
  The  term ``Dobrushin boundary condition'' usually refers to mixed $+/-$  boundary condition in  the setup  of the
  Ising model; however the main idea  is the same here, this choice of
  boundary condition  forces the  existence of a  macroscopic boundary
  between two regions  in the domain ($+/-$ for the  Ising model, open
  and dual-open in the case of the random-cluster model), which is why
  we use the same term here.
\end{remark}

\paragraph{The domain Markov property}

One  can  encode, using  appropriate  boundary  conditions $\xi$,  the
influence of the  configuration outside $F$ on the  measure within it.
In  other  words, given  the  state  of  edges  outside a  graph,  the
conditional  measure  inside  $F$  is a  random-cluster  measure  with
boundary conditions  given by the  wiring outside $F$.  More formally,
let $G$ be  a graph and fix  $F\subset \calE[G]$. Let $X$  be a random
variable    measurable   in    terms   of    edges   in    $F$   (call
$\mathcal F_{\calE[G]\setminus  F}$ the $\sigma$-algebra  generated by
edges of $\calE[G]\setminus F$). Then,
$$\phi_{G,{\bf p},q}^\xi(X|\mathcal F_{\calE[G]\setminus
  F})(\psi)=\phi^{\xi\cup\psi}_{F,{\bf                     p},q}(X),$$
where  $\xi$  denotes   boundary  conditions  on  $G$,   $\psi$  is  a
configuration outside $F$  and $\xi\cup \psi$ is  the wiring inherited
from  $\xi$  and  the  edges   in  $\psi$.  We  refer  to  \cite[Lemma
(4.13)]{Gri06} for details.

\paragraph{Comparison between boundary conditions}

Random-cluster models  with parameter  $q\geq 1$  are \emph{positively
  correlated}; see  \cite[Theorem (2.1)]{Gri06}.  It implies  that for
any  boundary  conditions $\psi\leq  \xi$  (meaning  that the  wirings
existing in $\psi$ exist in $\xi$ as well), we have
\begin{equation}
  \label{comparison_between_boundary_conditions}
  \phi^{\psi}_{G,{\bf p},q}(A)\leq \phi^{\xi}_{G,{\bf p},q}(A)
\end{equation}
for   any   increasing  event   $A$.   We   immediately  obtain   that
$\phi^0_{G,{\bf      p},q}(A)\le     \phi^\xi_{G,{\bf      p},q}(A)\le
\phi^1_{G,{\bf                                               p},q}(A)$
for any increasing event $A$ and any boundary conditions $\xi$.

\paragraph{Planar duality}

In two  dimensions, one  can associate  to any  random-cluster measure
with parameters ${\bf p}$ and $q$ on  $G$ a dual measure. Let us focus
on the case of free and wired boundary conditions.

Consider    a   configuration    $\omega$    sampled   according    to
$\phi^0_{G,{\bf p},q}$. Construct an edge  model on $G^*$ by declaring
any  edge  of  the dual  graph  to  be  open  (resp.\ closed)  if  the
corresponding edge of the primal graph is closed (resp.\ open) for the
initial random-cluster model. The new model  on the dual graph is then
a random-cluster measure with wired boundary conditions and parameters
${\bf    p}^*    =    {\bf   p}^*(p,q)\in[0,1]^{E(G^*)}$    and    $q$
satisfying
$$p^*_{e^*} = \frac{(1-p_e)q}{(1-p_e)q+p_e}, \; \text{or equivalently}
\;                      \frac{p^*_{e^*}p_e}{(1-p^*_{e^*})(1-p_e)}=q.$$
This relation  is known as  the \emph{planar duality}.  Similarly, the
dual  boundary  conditions  of  wired  boundary  conditions  are  free
boundary conditions. See \cite[Section 6.1]{Gri06}.

\paragraph{Infinite-volume measures}

A probability  measure $\phi$  on $(\Omega,\mathcal  F)$ is  called an
\emph{infinite-volume  random-cluster}  measure   on  $G_\infty$  with
parameters $p$  and $q$ if for  every event $A\in \mathcal  F$ and any
finite $G\subset G_\infty$,
$$\phi(A|\mathcal F_{\calE[G_\infty\setminus
  G]})(\xi)=\phi^\xi_{G,{\bf                               p},q}(A),$$
for      $\phi$-almost      every     $\xi\in      \Omega$,      where
$\mathcal  F_{\calE[G_\infty\setminus  G]}$  is  the  $\sigma$-algebra
generated by edges in $G_\infty \setminus G$.

The  domain  Markov  property  and  the  comparison  between  boundary
conditions  allow  us to  define  an  infinite-volume measure  as  the
(increasing) limit of a sequence  of random-cluster measures in finite
nested graphs $G_n\nearrow G_\infty$ with free boundary conditions. In
such  cases, the  sequence of  measures is  increasing. We  denote the
corresponding limit measure  $\phi^0_{G_\infty,{\bf p},q}$. Similarly,
one  can  construct  the   measure  $\phi^1_{G_\infty,{\bf  p},q}$  by
considering measures  on nested boxes with  wired boundary conditions. The comparison between boundary conditions~\eqref{comparison_between_boundary_conditions} implies that for any increasing event $A$
\begin{equation}\label{eq:are}\phi^0_{G_\infty,{\bf p},q}(A)\le \phi(A)\le \phi^1_{G_\infty,{\bf p},q}(A).\end{equation}
Section  4 of  \cite{Gri06}  presents a  comprehensive  study of  this
question.

\paragraph{The diamond graph of a Dobrushin domain}

Let   $G_\infty$    be   an    infinite   isoradial    graph.   Define
$G_\infty^\diamond=(\calV[G_\infty^\diamond],\calE[G_\infty^\diamond])$
to        be       the        graph       with        vertex       set
$\calV[G_\infty]\cup \calV[G^*_\infty]$  and edge  set given  by edges
between  a site  $x$  of  $\calV[G_\infty]$ and  a  dual  site $v$  of
$\calV[G^*_\infty]$ if $x$  belongs to the face  corresponding to $v$.
It is  then a \emph{rhombic  graph}, \emph{i.e.}\ a graph  whose faces
are  rhombi;  see  Fig.~\ref{fig:diamond_lattice}.  To  emphasize  the
distinction    with   edges    of    $G_\infty$,   $G^*_\infty$    and
$G^\diamond_\infty$, we refer to the latter as \emph{diamond edges}.

We now define the diamond graph  in the case of Dobrushin domains. Let
$(G,a,b)$    be    a    Dobrushin   domain.    The    diamond    graph
$G^\diamond=(\calV[G^\diamond],\calE[G^\diamond])$ is  the subgraph of
$G_\infty^\diamond$         composed        of         sites        in
$\calV[G]\cup  \calV[G^*]\cup \partial^*_{ab}$  and  of diamond  edges
between them; see Fig.~\ref{fig:diamond_lattice} again.

\paragraph{Loop representation on a Dobrushin domain}

Let $(G,a,b)$ be a Dobrushin domain. In this paragraph, we aim for the
construction of the loop representation of the random-cluster model.

Consider a configuration $\omega$, it defines clusters in $G$ and dual
clusters  in $G^*$.  Through every  face of  the diamond  graph passes
either an open  edge of $G$ or  a dual open edge  of $G^*$. Therefore,
there is a  unique way to draw Eulerian (\emph{i.e.}  using every edge
exactly  once)  loops  on  the diamond  graph  ---  \emph{interfaces},
separating clusters from dual clusters. Namely, loops pass through the
center of  diamond edges, and  in a face  of the diamond  graph, loops
always makes  a turn so  as not  to cross the  open or dual  open edge
through  this face;  see Figure~\ref{fig:diamond_lattice}.  We further
require  that  loops cross  each  diamond  edge orthogonally.  Besides
loops, the  configuration will have  a single curve joining  the edges
adjacent to $a$  and $b$, which are the diamond  edges $e_a$ and $e_b$
connecting   a   site  of   $\partial_{ab}$   to   a  dual   site   of
$\partial_{ba}^*$. This  curve is called the  \emph{exploration path};
we will denote it by $\gamma$. It corresponds to the interface between
the cluster  connected to the  wired arc $\partial_{ba}$ and  the dual
cluster connected to the free arc $\partial_{ab}^*$.

This   provides   us   with   a   bijection   between   random-cluster
configurations   on   $G$   and  Eulerian   loop   configurations   on
$G^{\diamond}$.    This   bijection    is   called    the   \emph{loop
  representation} of  the random-cluster model. We  orientate loops in
such a way that  they cross  every diamond  edge $e$ with the
end-point of $e$ in $\calV[G]$ on its left, and the end-point of $e$ in
$\calV[G^*]$ on its right.

Let  ${\bf  p}\in(0,1)^{\calE[G]}$.  The probability  measure  can  be
nicely  rewritten  (using  Euler's  formula)  in  terms  of  the  loop
picture:
$$\phi_{G,{\bf         p},q}^{a,b}        (\{\omega\})         =\frac1
{\tilde{Z}^{a,b}_{G,{\bf     x},q}}\Big(\prod_{e\in    \omega}x_e\Big)
\sqrt{q}^{\#                    \;                    \text{loops}},$$
where $\tilde{Z}^{a,b}_{G,{\bf  x},q}$ is  a normalizing  constant and
${\bf x}=(x_e:e\in \calE[G])\in(0,\infty)^{\calE[G]}$ is given by
$$x_e=\frac{p_e}{(1-p_e)\sqrt q}.$$

\begin{figure}[ht]
  \begin{center}
    \includegraphics[width=.5\hsize]{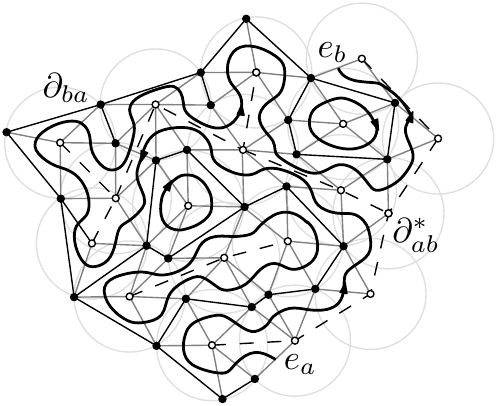}
  \end{center}
  \caption{Construction   of   the   diamond  graph   and   the   loop
    representation.}
  \label{fig:diamond_lattice}
\end{figure}

\paragraph{Critical weights for isoradial graphs}

In the  case of isoradial graphs,  a natural family of  weights can be
defined. Let
$$x_e=\frac{p_e(1)}{(1-p_e(1))\sqrt
  q}=\frac{\sin[\frac{\sigma\theta_e}2]}
{\sin[\frac{\sigma(\pi-\theta_e)}2]}.$$
The  bounded  angle  property  immediately implies  that  weights  are
uniformly bounded away from 0 and 1.

This family of  weights on isoradial graph is self-dual,  in the sense
that   the  dual   of   a  random-cluster   model  with   edge-weights
${\bf  x}=(x_e:e\in  \calE[G])$  is  a random  cluster  model  on  the
(isoradial)         dual        graph         with        edge-weights
$(x_{e^*}:e^*\in \calE[G^*])$.

Let  us stress  out  the  fact that  many  other  families of  weights
${\bf x}$ are self-dual. Nevertheless, this family will play a special
role for reasons that will become apparent later in the article.

Fix  $\beta>0$.   From  now   on,  we  will   consider  random-cluster
measures
$$\phi_{G,\beta,q}^{\xi}  (\{\omega\}) =\phi_{G,\beta{\bf  x},q}^{\xi}
(\{\omega\})=\frac1                      {\tilde{Z}^{a,b}_{G,\beta{\bf
      x},q}}\Big(\prod_{e\in  \omega}\beta  x_e\Big)  \sqrt{q}^{\#  \;
  \text{loops}},$$
for       any       configuration      $\omega$.       Note       that
$(\phi^0_{G,\beta,q})^*=\phi^1_{G^*,\beta^{-1},q}$.

\paragraph{Phase transition and critical point in the periodic case}

In this paragraph, isoradial graphs are assumed to be periodic. We aim
to study  the behavior  of $\phi^\xi_{G,\beta,q}$ when  $\beta$ varies
from  $0$  to $\infty$.  Positive  association  of the  model  implies
that  $$\phi_{G,\beta,q}^\xi(A)\le \phi^\xi_{G,\beta',q}(A)$$  for any
increasing   event  $A$   and   $\beta\le  \beta'$   (in  such   case,
$\phi_{G,\beta,q}^\xi(A)$ is  said to  be stochastically  dominated by
$\phi_{G,\beta',q}^\xi(A)$).  The previous  inequality extends  to the
infinite volume. It is therefore possible to define
$$\beta_c = \sup\{\beta\ge0:\phi_{G_\infty,\beta,q}^0
(0\longleftrightarrow                                   \infty)=0\},$$
where  $0\longleftrightarrow  \infty$  denotes  the  fact  that  0  is
contained in an infinite open path.  This value is called the critical
point.

We used $\phi_{G_\infty,\beta,q}^0$ to define the critical point but the infinite-volume  measure is not necessarily  unique. Nevertheless,
it can be shown  that for a fixed $q\geq 1$ and a periodic isoradial graph $G_\infty$,  uniqueness can fail only
on a countable set $\mathcal{D}_{q,G_\infty}$. More precisely:

\begin{proposition}\label{uniqueness}
  Let $G_\infty$  be a  periodic isoradial graph.  There exists  an at
  most countable  set $\mathcal{D}_{q,G_\infty}\subset(0,\infty)$ such
  that for  any $\beta\notin \mathcal D_{q,G_\infty}$,  there exists a
  unique infinite-volume measure on $G_\infty$ with parameters $\beta$
  and $q\ge 1$.
\end{proposition}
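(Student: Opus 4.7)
My plan is to follow Grimmett's classical argument for $\bbZ^d$ (see \cite[Sec.~4.4]{Gri06}) via convexity of the free energy. Periodicity will let me pick an exhaustion $(\Lambda_n)$ of $G_\infty$ by finite subgraphs with $|\partial \Lambda_n|/|\calE[\Lambda_n]|\to 0$. After writing
\[
Z^{\xi}_{\Lambda_n,\beta,q} \;\propto\; \sum_{\omega}\beta^{|\omega|}\,c(\omega,\xi)
\]
with $c(\omega,\xi)$ independent of $\beta$, the map $t\mapsto \log Z^{\xi}_{\Lambda_n,e^t,q}$ will be convex with first derivative $\bbE^{\xi}_{\Lambda_n,e^t,q}[N_n]$ and non-negative second derivative $\mathrm{Var}^{\xi}_{\Lambda_n,e^t,q}(N_n)$, where $N_n$ counts open edges in $\Lambda_n$. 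Combining $Z^0_{\Lambda_n}\le Z^1_{\Lambda_n}\le q^{|\partial \Lambda_n|}Z^0_{\Lambda_n}$ with a standard subadditive argument along the $\bbZ^2$-orbits, I expect to extract the limit
\[
\psi(t)\;:=\;\lim_{n\to\infty}\frac{1}{|\calE[\Lambda_n]|}\log Z^{\xi}_{\Lambda_n,e^t,q},
\]
which should exist and be independent of $\xi\in\{0,1\}$; as a pointwise limit of convex functions it is itself convex on $\bbR$.

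\textbf{From convexity to equal marginals.} A real convex function is differentiable outside an at most countable set $\calD'\subset\bbR$, which (after exponentiation) will be the sought-after exceptional set. The standard convex-analysis lemma---at any point of differentiability of a pointwise limit of convex functions, the sequence of derivatives converges to the limit's derivative---will yield, at every $t\notin\calD'$,
\[
\psi'(t)\;=\;\lim_{n\to\infty}\frac{\bbE^{0}_{\Lambda_n,e^t,q}[N_n]}{|\calE[\Lambda_n]|}\;=\;\lim_{n\to\infty}\frac{\bbE^{1}_{\Lambda_n,e^t,q}[N_n]}{|\calE[\Lambda_n]|}.
\]
Weak convergence of the finite-volume measures to $\phi^{0}_{G_\infty,\beta,q}$ and $\phi^{1}_{G_\infty,\beta,q}$ (with $\beta=e^t$) will identify each side with the translation-invariant mean open-edge density of the corresponding infinite-volume measure. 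Periodicity reduces this mean density to a finite convex combination of the single-edge marginals $\phi^{0/1}_{G_\infty,\beta,q}(\omega(e_i)=1)$ indexed by the finitely many edge orbits; combined with the per-edge stochastic domination $\phi^{0}\le_{\mathrm{st}}\phi^{1}$, equality of the two convex combinations will force equality edge by edge.

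\textbf{Conclusion and main difficulty.} Once single-edge marginals agree and $\phi^{0}\le_{\mathrm{st}}\phi^{1}$ holds, Strassen's theorem supplies a monotone coupling $\omega^0\le\omega^1$ whose equal marginals force $\omega^0=\omega^1$ almost surely; hence $\phi^{0}_{G_\infty,\beta,q}=\phi^{1}_{G_\infty,\beta,q}$, and the sandwich~\eqref{eq:are} yields uniqueness of the infinite-volume measure, so one may take $\calD_{q,G_\infty}:=\{e^t:t\in\calD'\}$. The main obstacles will be the Griffiths/Lebowitz-type lemma justifying the identification of $\psi'(t)$ with the two limiting densities (interchange of thermodynamic limit and differentiation for the convex sequences $|\calE[\Lambda_n]|^{-1}\log Z^{\xi}_{\Lambda_n,e^t,q}$), together with the control of boundary contributions showing $\psi$ independent of $\xi$; both of these are where the periodicity of $G_\infty$ plays an essential role.
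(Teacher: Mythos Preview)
Your proposal is correct and follows essentially the same free-energy convexity argument as the paper, which in turn adapts \cite[Theorem~(4.63)]{Gri06}. The only cosmetic differences are that you differentiate in $t=\log\beta$ rather than in $\beta$ (so your derivative is the unweighted edge density $\bbE[N_n]/|\calE[\Lambda_n]|$ instead of the $x_e$-weighted one $\sum_e x_e\,\phi^\xi(e\text{ open})/|\calE[\Lambda_n]|$), and that you make the concluding step explicit via a Strassen coupling, whereas the paper simply asserts that equality of the limiting weighted densities ``is enough to guarantee that the measures $\phi^0_{G_\infty,\beta,q}$ and $\phi^1_{G_\infty,\beta,q}$ coincide.''
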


\begin{proof}
  The proof follows the argument of \cite[Theorem (4.63)]{Gri06} quite
  closely, so we only give a sketch here. Define the \emph{free energy
    per unit volume} in a finite box as
  $$H_G^\xi(\beta)=\frac{1}{|E[G]|}\log \tilde
  Z^\xi_{G,\beta,q}.$$
  We                                                              have
  $$\frac{\partial}{\partial                                    \beta}
  H_G^\xi(\beta)=\frac{1}{|E[G]|}\sum_{e\in  E[G]}
  x_e\phi^\xi_{G,\beta,q}(e\text{     is    open})\in[0,\max\{x_e:e\in
  E[G_\infty]\}];$$
  in  particular,  $H_G^\xi$  is convex.  Now,  let  $G$
  increase  to  cover  the  whole lattice.  A  classical  argument  of
  boundary-area   energy   comparison   (see   the   lines   following
  \cite[Equation (4.71)]{Gri06})  shows that  the limit  $H(\beta)$ of
  $H_G^\xi(\beta)$ exists  and does  not depend on  the boundary
  condition $\xi$.

  Since $H$ is a uniform limit  of convex functions, it is convex, and
  therefore   differentiable  outside   an  at   most  countable   set
  $\mathcal  {D}_{q,G_\infty}$.  We may check  that for     any
  $\beta\notin \mathcal D_{q,G_\infty}$, both
  $\frac{\partial}{\partial    \beta}     H_G^1(\beta)$    and
  $\frac{\partial}{\partial  \beta} H_G^0(\beta)$  converge to
  the     same      limit,     which     is     also      equal     to
  $\frac{\partial}{\partial     \beta}      H(\beta)$. Hence, for any such $\beta$,
  $$\lim_{G\nearrow G_\infty}\frac{1}{|E[G]|}\sum_{e\in
    E[G]}            x_e\phi^0_{G,\beta,q}(e\text{            is
    open})=\lim_{G\nearrow
    G_\infty}\frac{1}{|E[G]|}\sum_{e\in              E[G]}
  x_e\phi^1_{G,\beta,q}(e\text{               is              open})$$
  which is  enough to  guarantee that the  measures $\phi_{G_\infty,\beta,q}^0$
  and $\phi_{G_\infty,\beta,q}^1$ coincide; this  in turn implies uniqueness of
  the Gibbs measure for all $\beta \notin \calD_{q,G_\infty}$.
\end{proof}

Since the infinite-volume  measure is unique for  almost every $\beta$
(at     fixed     $q$),     for    any     infinite-volume     measure
$\phi_{G_\infty,\beta,q}$,
\begin{equation}\label{eq:bre}\phi_{G_\infty,\beta,q}(0\longleftrightarrow \infty)\begin{cases} \
  =0&\text{~if~}\beta<\beta_c\\
  \ >0 &\text{~if~}\beta>\beta_c\end{cases}.\end{equation}

\paragraph{Observables for Dobrushin domains}

Fix a Dobrushin domain $(G,a,b)$  and consider the loop representation
of the random-cluster model.  Following~\cite{Smi10}, we now define an
observable  $F$ on  the  edges  of its  diamond  graph, \emph{i.e.}  a
function  $F   :  \calE[G^{\diamond}]  \to   \mathbb{R}_{+}$.  Roughly
speaking,  $F$  is   a  modification  of  the   probability  that  the
exploration  path passes  through the  center  of an  edge. First,  we
introduce    the    following     definition:    the    \emph{winding}
$\text{W}_{\Gamma}(z,z')$ of  a curve  $\Gamma$ between two  edges $z$
and $z'$ of the diamond graph  is the total rotation (in radians) that
the curve makes from the center of  the diamond edge $z$ to the center
of the diamond edge $z'$.

Let  $q>4$.  We  define  the  observable  $F$  for  any  diamond  edge
$e\in \calE[G^{\diamond}]$ by
\begin{equation}
  \label{defF}
  F(e) = \phi_{G,\beta,q}^{a,b} \left({\rm e}^{\sigma
      \text{W}_{\gamma}(e,e_b)} \mathbbm{1}_{e\in \gamma}\right),
\end{equation}
where $\gamma$ is the exploration path  and $\sigma>0$ is given by the
relation
\begin{equation}
  \cosh \big(\sigma \frac\pi2\big) = \frac {\sqrt{q}} {2}.
\end{equation}
For $\sigma>0$  to exist, $q$ needs  to be larger than  $4$, hence the
hypothesis in the theorem. We define the function $\tilde{F}$ by
\begin{equation}
  \label{defFtilde}
  \tilde F(e) = \phi_{G,\beta,q}^{a,b} \left({\rm e}^{-\sigma
      \text{W}_{\gamma}(e,e_b)} \mathbbm{1}_{e\in \gamma}\right).
\end{equation}

\bigbreak
\begin{remark}
  The                                                       observable
  $G(e)=     \mathbb{E}_{G,a,b}    \left({\rm     e}^{{\rm    i}\sigma
      \text{W}_{\gamma}(e,e_b)}   \mathbbm{1}_{e\in   \gamma}\right)$,
  where  $\sin(\sigma\pi/2)=\sqrt q/2$,  was  introduced  in the  case
  $q\le 4$  in \cite{Smi10} for  the square lattice. When  weights are
  critical, one obtains around each vertex $v$
  $$G(NW)-G(SE)={\rm i}[G(NE)-G(SW)],$$
  where $NW$, $SE$,  $NE$ and $SW$ are the your  edges incident to $v$
  indexed in the obvious way. This  relation can be seen as a discrete
  version of  the Cauchy-Riemann  equation. The  observable is  then a
  holomorphic parafermion  of spin  $\sigma$ (which  itself is  a real
  number in $[0,1]$). For $q\geq  4$, $\sigma$ is purely imaginary and
  does not have  an obvious physical meaning; it  would nonetheless be
  interesting to  find one. In this  article, one could work  with the
  corresponding $G$ for $q>4$, but the definitions in \eqref{defF} and
  \eqref{defFtilde} are easier  to handle for the  applications we have
  in mind.
\end{remark}

\section{A representation formula for the observable}
\label{sec:representation}

Let $(G,a,b)$ be a Dobrushin domain.  In this section, we estimate the
sum of  $F$ over a  set $E\subset \calE[G^\diamond]$ in  various ways.
Let $\calF^\diamond$  be the set  of inner faces of  $G^\diamond$. Any
$f\in    \calF^\diamond$    is    bordered   by    four    edges    in
$\calE[G^\diamond]$, which we label counterclockwise $A$, $B$, $C$ and
$D$, so that  locally the loops (or the exploration  path) go from $f$
to the outside when crossing $A$ and  $C$, and from the outside to $f$
when crossing $B$ and $D$; see Figure~\ref{fig:loop_involution}. There
are a priori two ways to do so, but the choice will be irrelevant.

\begin{lemma}
  \label{relation_around_a_vertex}
  Fix $\beta>0$ and $q>4$. For every face $f\in \calF^{\diamond}$,
  \begin{equation}
    \label{rel_vertex}
    F(B)+F(D) =\Lambda_e(\beta x_e) \left[F(A)+F(C)\right],
  \end{equation}
  where $e$ is the edge of $G$ passing through $f$, and $\Lambda_e$ is
  given                                                             by
  $\displaystyle\Lambda_e(x)                   =                  {\rm
    e}^{-\sigma(\pi-\theta_e)}\frac{x+{\rm                  e}^{\sigma
      \frac\pi2}}{x+{\rm e}^{-\sigma \frac\pi2}}$.
\end{lemma}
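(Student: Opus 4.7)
The proof is a local check: \eqref{rel_vertex} is verified by pairing each configuration $\omega$ with the configuration $\omega'$ obtained by flipping the state of the primal edge $e$ passing through $f$, and showing that each such pair's contribution to the two sides satisfies the relation. Since $\omega$ and $\omega'$ agree on every edge other than $e$, the external part of the exploration path from any $z\in\{A,B,C,D\}$ out to $e_b$ has the same winding in both configurations, so a common external factor $e^{\sigma w_z}$ can be pulled out of the pair sum.

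Two local inputs drive the computation. Geometrically, the rhombus $f$ has interior angle $\theta_e$ at each of its two dual vertices and $\pi-\theta_e$ at each of its two primal vertices. When $e$ is open, the interfaces inside $f$ avoid $e$ and so pair the four diamond edges via turns around the two dual vertices, each turn contributing an internal winding of magnitude $\pi-\theta_e$; when $e$ is closed, they instead pair them via turns around the primal vertices, each contributing an internal winding of magnitude $\theta_e$. Probabilistically, the Boltzmann weight ratio satisfies $\phi^{a,b}(\omega_{\rm open})/\phi^{a,b}(\omega_{\rm closed})=\beta x_e\cdot\sqrt{q}^{\pm 1}$, where the $\pm 1$ reflects whether flipping $e$ merges two loops into one or splits one loop into two at $f$ (determined by the external connectivity around $\partial f$).

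It remains to enumerate the cases. If $\gamma$ does not touch $f$ in one configuration of the pair then it does not in the other, and all four contributions vanish trivially. Otherwise $\gamma$ enters $f$ in both configurations, and one distinguishes sub-cases according to the external non-crossing pairing of $\{A,B,C,D,e_a,e_b\}$: flipping $e$ may either change a single traversal of $f$ into another single traversal with different entry/exit edges, or switch between a single and a double traversal. In each sub-case, one factors out the common external winding factor and sums the contributions from $\omega$ and $\omega'$; the defining identity $e^{\sigma\pi/2}+e^{-\sigma\pi/2}=\sqrt q$ (equivalent to $\cosh(\sigma\pi/2)=\sqrt q/2$) collapses the weight terms into the rational factor $\frac{\beta x_e+e^{\sigma\pi/2}}{\beta x_e+e^{-\sigma\pi/2}}$ appearing in $\Lambda_e$, while the exponential prefactor $e^{-\sigma(\pi-\theta_e)}$ emerges from the difference between the primal-vertex and dual-vertex internal windings on the two sides of \eqref{rel_vertex}.

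The main obstacle is the combinatorial bookkeeping of the sub-cases (particularly the double-traversal case, in which $\gamma$ uses all four diamond edges around $f$) together with consistent sign tracking of the windings; once this is carried out, the algebraic collection of terms into $\Lambda_e(\beta x_e)$ is a direct consequence of the two local inputs above, and summing over pairs yields \eqref{rel_vertex}.
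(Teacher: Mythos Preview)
Your approach is essentially the same as the paper's: pair each configuration with its edge-flip at $e$, reduce \eqref{rel_vertex} to a per-pair identity, handle the trivial case where $\gamma$ misses $f$, and in the nontrivial case combine the local winding increments with the weight ratio using $e^{\sigma\pi/2}+e^{-\sigma\pi/2}=\sqrt q$. The paper carries this out concretely by fixing (up to symmetry) the entry/exit pattern and tabulating the eight contributions $A_\omega,\dots,D_{s(\omega)}$ explicitly.

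One small correction to your case enumeration: the sub-case ``single traversal $\to$ single traversal with different entry/exit edges'' does not actually occur here. If $\gamma$ uses exactly one of the two internal arcs in $\omega$, the other arc lies on a closed loop, so its two endpoints are externally connected to each other; after the flip the new pairing threads $\gamma$ through all four diamond edges. Thus (once $\gamma$ touches $f$) the involution always exchanges the single-traversal and double-traversal pictures, and the loop-count change is exactly $\pm 1$, giving the weight ratio $\beta x_e\sqrt q$ in the direction the paper uses. This does not affect the validity of your outline, only shortens the bookkeeping you anticipated.
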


A  similar  statement was  used  in  \cite{BDC11a} to  derive  massive
harmonicity of the  observable when $q=2$ on the  square lattice. This
enabled  to compute  the correlation  length of  the high  temperature
Ising model. Observe that $\Lambda(x_e)=1$.

\begin{proof}
  Consider the  involution $s$  on the  space of  configurations which
  switches  the state  (open or  closed) of  the edge  of $G$  passing
  through $f$.

  Let  $e$   be  an  edge   of  the   diamond  graph  and   denote  by
  $e_{\omega}={\rm  e}^{\sigma   W_{\gamma}(e,e_b)}  \mathbbm{1}_{e\in
    \gamma}                                                 p(\omega)$
  the  contribution of  $\omega$ to  $F(e)$ (here  $p(\omega)$ is  the
  probability  of  the  configuration   $\omega$).  Since  $s$  is  an
  involution,        the        following       relation        holds:
  $$F(e)=\sum_{\omega}   e_{\omega}=\textstyle\frac{1}{2}\displaystyle
  \sum_{\omega}     \left[     e_{\omega}+e_{s(\omega)}     \right].$$
  In  order to  prove  (\ref{rel_vertex}), it  suffices  to prove  the
  following for any configuration $\omega$:
  \begin{equation}
    \label{c}
    B_{\omega} +    B_{s(\omega)} + D_{\omega} + D_{s(\omega)}  =
    \Lambda(\beta x_e) \left[A_{\omega} + A_{s(\omega)} + C_{\omega} +
      C_{s(\omega)} \right].
  \end{equation}
  When $\gamma(\omega)$ does  not go through any of  the diamond edges
  bordering   $f$,   neither   does   $\gamma(s(\omega))$.   All   the
  contributions then  vanish and  identity \eqref{c}  trivially holds.
  Thus we may assume that $\gamma(\omega)$ passes through at least one
  edge bordering $f$.  The interface enters $f$ through  either $A$ or
  $C$ and  leaves through $B$ or  $D$. Without loss of  generality, we
  assume that it enters first through $A$ and leaves last through $D$;
  the other cases are treated similarly.

  \begin{figure}[ht]
    \begin{center}
      \includegraphics[width=0.5\hsize]{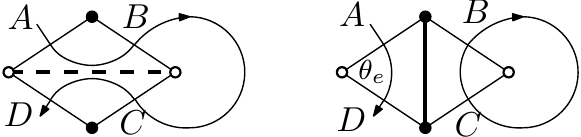}
    \end{center}
    \caption{Two associated configurations $\omega$ and $s(\omega)$}
    \label{fig:loop_involution}
  \end{figure}

  Two cases  can occur (see  Figure~\ref{fig:loop_involution}): Either
  the exploration  curve, after  arriving through $A$,  leaves through
  $B$ and then returns a second time through $C$, leaving through $D$;
  or the exploration curve arrives through $A$ and leaves through $D$,
  with $B$ and $C$ belonging to a loop. Since the involution exchanges
  the two cases, we can assume  that $\omega$ corresponds to the first
  case. Knowing the  term $A_{\omega}$, it is possible  to compute the
  contributions  of  $\omega$ and  $s(\omega)$  to  all of  the  edges
  bordering $f$. Indeed,
  \begin{itemize}
  \item the probability of $s(\omega)$ is equal to $\beta x_e\sqrt{q}$
    times the probability  of $\omega$ (due to the fact  that there is
    one additional loop, and the primal edge crossing $f$ is open);
  \item windings  of the curve can  be expressed using the  winding of
    the  edge   $A$.  For  instance,   the  winding  of  $B$   in  the
    configuration $\omega$  is equal  to the winding  of the  edge $A$
    plus an additional $-\theta_e$ turn.
  \end{itemize}
  Contributions are computed in the following table.
  \bigbreak
  \begin{center}\begin{tabular}{|c|c|c|c|c|}
      \hline
      configuration  &  $A$ &  $B$ &  $C$ & $D$\\
      \hline
      $\omega$ & $A_{\omega}$ &
      ${\rm e}^{\sigma\theta_e}A_{\omega}$ &
      ${\rm e}^{-\sigma\pi}A_{\omega}$ &
      ${\rm e}^{-\sigma(\pi-\theta_e)}A_{\omega}$\\
      \hline
      $s(\omega)$ & $\beta x_e\sqrt{q}A_{\omega}$ & 0 & 0 & ${\rm
        e}^{-\sigma(\pi-\theta_e)}\beta x_e\sqrt{q}A_{\omega}$ \\
      \hline
    \end{tabular}
  \end{center}
  \bigbreak              Using               the              identity
  ${\rm e}^{\sigma\frac\pi2}+{\rm  e}^{-\sigma\frac\pi2}=\sqrt{q}$, we
  deduce  \eqref{c} by  summing  the contributions  of  all the  edges
  bordering $f$.
\end{proof}

For a  set $E$ of  edges of $\calE[G^\diamond]$, $\partial_{\rm  e} E$
denotes the  set of edges of  $\calE[G^\diamond]\setminus E$ bordering
the same face as  an edge of $E$. Also define $E_{\rm  int}$ to be the
set of edges of the diamond graph between two faces of $\calF^\diamond$.

\begin{proposition}
  \label{area-boundary}
  Fix $\beta< 1$ and  $q>4$. Let $G_\infty$ satisfying (BAP$_\theta$).
  There exists $C_1 = C_1(\beta,q,\theta)<\infty$ such that
  \begin{equation*}
    \sum _{e\in E} F(e)\le C_1 \sum_{e\in \partial_{\rm e} E} F(e)
  \end{equation*}
  for any $E\subset E_{\rm int}$.
\end{proposition}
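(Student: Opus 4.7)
The plan is to sum the local identity $F(B_f)+F(D_f)=\Lambda_{e(f)}(\beta x_{e(f)})[F(A_f)+F(C_f)]$ from Lemma~\ref{relation_around_a_vertex} over the inner faces of the diamond graph that touch $E$, and then exploit two ingredients: a uniform mass gap $\Lambda_e(\beta x_e)-1\ge \delta(\beta,q,\theta)>0$ for $\beta<1$, and a bipartite cancellation coming from the global loop orientation. Let $\widetilde{\calF}(E)\subset \calF^\diamond$ denote the set of faces bordering at least one edge of $E$, and set
\[ T := \sum_{f\in \widetilde{\calF}(E)} \bigl[F(A_f)+F(C_f)\bigr], \qquad T' := \sum_{f\in \widetilde{\calF}(E)} \bigl[F(B_f)+F(D_f)\bigr]. \]

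I would first extract the uniform gap from the explicit formula for $\Lambda_e$: the map $x\mapsto \Lambda_e(x)$ is strictly decreasing and equals $1$ at $x=x_e$, so $\beta<1$ yields $\Lambda_e(\beta x_e)>1$ pointwise, and (BAP$_\theta$) combined with continuity on the compact range $\theta_e\in[\theta,\pi-\theta]$ upgrades this to a uniform lower bound $1+\delta$. The bipartite property I would invoke is that, since the loops of the representation carry a globally consistent orientation (primal cluster on the left, dual on the right), a loop crossing a diamond edge $e$ shared by two faces $f_1,f_2$ necessarily leaves one face and enters the other; in the labelling conventions of Lemma~\ref{relation_around_a_vertex} this means $e$ belongs to $\{A_{f_i},C_{f_i}\}$ for exactly one $i\in\{1,2\}$ and to $\{B_{f_j},D_{f_j}\}$ for the other.

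From these two facts the argument becomes signed bookkeeping. For each $e'\in\calE[G^\diamond]$, let $n^{AC}(e'),n^{BD}(e')\in\{0,1\}$ be the number of faces of $\widetilde{\calF}(E)$ bordering $e'$ and labelling it $A/C$, resp.\ $B/D$. For $e'\in E$, both adjacent faces lie in $\widetilde{\calF}(E)$, and by the bipartite property $n^{AC}(e')=n^{BD}(e')=1$; for $e'\in \partial_{\rm e}E$ at least one adjacent face lies in $\widetilde{\calF}(E)$; for any other edge both counts vanish. Writing $S:=\sum_{e\in E}F(e)$, this immediately gives $T\ge S$ and, since the contributions of $E$ cancel exactly in the difference,
\[ T'-T \;=\; \sum_{e'\in \partial_{\rm e}E}\bigl(n^{BD}(e')-n^{AC}(e')\bigr)F(e') \;\le\; \sum_{e'\in \partial_{\rm e}E}F(e'). \]
On the other hand, summing the local identity over $\widetilde{\calF}(E)$ produces $T'-T = \sum_{f}(\Lambda_{e(f)}(\beta x_{e(f)})-1)[F(A_f)+F(C_f)]\ge \delta T\ge \delta S$. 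Combining the two bounds yields $\delta S\le \sum_{e'\in \partial_{\rm e}E}F(e')$, which is the desired estimate with $C_1=1/\delta$. The real obstacle is the bipartite cancellation identity: the whole estimate hinges on the $A/C$ vs.\ $B/D$ partition flipping across every shared diamond edge, so that interior contributions vanish in $T'-T$ and only $\partial_{\rm e}E$ survives; once that geometric fact (a consequence of the loop orientation fixed in Section~\ref{sec:basic}) is in hand, the rest is elementary signed telescoping together with a compactness bound on $\Lambda_e(\beta x_e)-1$.
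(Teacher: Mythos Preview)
Your proof is correct and follows essentially the same approach as the paper: you sum the local identity over the faces meeting $E$, use the orientation of the loops to see that each interior edge is of type $A/C$ for one adjacent face and $B/D$ for the other (so interior contributions cancel in $T'-T$), and exploit the uniform gap $\Lambda_e(\beta x_e)-1\ge\delta>0$. The only cosmetic differences are that the paper computes $\Lambda_e(\beta x_e)-1$ explicitly rather than via monotonicity and compactness, and bounds the boundary coefficients by $\max|c(e)|$ rather than by $1$; your organization via $T,T'$ in fact yields the slightly sharper constant $C_1=1/\delta$.
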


\begin{proof}
  Sum the  identity \eqref{rel_vertex} over  all faces bordered  by an
  edge in $E$. It gives a weighted sum of $F(e)$ (with coefficients
  denoted by $c(e)$) which is identical to zero:
  \begin{equation}
    \label{eq:sum}
    0=\sum_{e\in E} c(e) F(e) +\sum_{e\in \partial_{\rm e} E} c(e)
    F(e).
  \end{equation}

  For an  edge $e\in E$,  $F(e)$ will  appear in two  such identities,
  corresponding  to the  two  faces  it borders.  Since  a loop  going
  through $e$ comes from one of the faces and enters through the other
  one, the  coefficients will be  $-1$ and $\Lambda(\beta  x_e)$. Thus,
  for $e\in E$, $F(e)$ enters the sum with a coefficient
  \begin{align*}
    c(e)=\Lambda(\beta x_e)-1&={\rm
    e}^{-\sigma(\pi-\theta_e)}\left(\frac{\beta x_e+{\rm e}^{\sigma
          \frac\pi2}}{\beta x_e+{\rm e}^{-\sigma
          \frac\pi2}}\right)-1=\left(\frac{x_e+{\rm e}^{-\sigma
          \frac\pi2}}{x_e+{\rm e}^{\sigma \frac\pi2}}\right)\left(\frac{\beta x_e+{\rm e}^{\sigma
          \frac\pi2}}{\beta x_e+{\rm e}^{-\sigma
          \frac\pi2}}\right)-1\\
    &=\left(1+\frac{(\beta -1)x_e}{x_e+{\rm e}^{\sigma
          \frac\pi2}}\right)\left(1-\frac{(\beta-1)x_e}{\beta x_e+{\rm
          e}^{-\sigma \frac\pi2}}\right)-1\\
    &=(\beta-1)x_e\left(\frac{{\rm e}^{-\sigma \frac\pi2}-{\rm
          e}^{\sigma \frac\pi2}}{(x_e+{\rm e}^{\sigma
          \frac\pi2})(\beta x_e+{\rm e}^{-\sigma
          \frac\pi2})}\right)=\frac{2(1-\beta)
      x_e\sinh(\textstyle\sigma\frac\pi2)}{(x_e+{\rm
        e}^{\sigma \frac\pi2})(\beta x_e+{\rm e}^{-\sigma
        \frac\pi2})}\\
    &\ge  2(1-\beta)\,\min \big\{x_e:e\in
    \calE[G]\big\}\,\sinh(\textstyle\sigma\frac\pi2).
  \end{align*}
  In the  second equality, we  used that $\Lambda(x_e)=1$.  Because of
  the  bounded  angle property,  $x_e$  is  bounded  away from  0  and
  $\infty$ uniformly, and so is $c(e)$.

  For an edge $e\in\partial_{\rm e}  E$, $F(e)$ will appear in exactly
  one identity, (corresponding to the face that it shares with an edge
  of  $E$). The  coefficient  will be  $\Lambda(\beta  x_e)$ or  $-1$,
  depending on the  orientation of $e$ with respect to  the face. Thus
  $F(e)$ will enter the sum with a coefficient $c(e)$ which is bounded
  from above uniformly in $e$  (thanks to the Bounded Angle Property).
  The proposition follows immediately by setting
  $$C_1 =C_1(\beta,q,\theta):= \frac{\max \{|c(e)|:e\in \partial_{\rm e}
    E\}}{\min\{|c(e)|:e\in        E\}}=\frac{\max        \{|c(e)|:e\in
    E[G_\infty^\diamond]\}}{\min\{|c(e)|:e\in
    E[G_\infty^\diamond]\}}<0.$$
  Note that  $C_1$ depends only on $\beta$, $q$  and $\theta$, but  not on
  $G_\infty$ or $E$.
\end{proof}

Write   $\partial   \calE[G^\diamond]$   for  the   edge-boundary   of
$G^\diamond$, meaning the set of diamond edges connecting two boundary
vertices. Note that it is simply $E[G^\diamond]\setminus E_{\rm int}$.

\begin{lemma}
  \label{boundary}
  Let $(G,a,b)$ be a Dobrushin domain. For  a site $u$ on the free arc
  $\partial_{ab}$ and $e\in \partial \calE[G^\diamond]$ a diamond edge
  incident to  $u$, we  have \begin{equation*}  F(e)~=~{\rm e}^{\sigma
      W(e,e_b)}\cdot\phi_{G,\beta,q}^{a,b}(u\leftrightarrow
    \text{wired arc }\partial_{ba} ),
  \end{equation*}
  where $W(e,e_b)$ is the winding of an arbitrary curve on the diamond
  graph from $e$ to $e_b$.
\end{lemma}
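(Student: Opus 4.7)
The plan is to unpack the definition of $F(e)$ given in \eqref{defF} and to show that, when $e$ is a boundary diamond edge incident to $u\in\partial_{ab}$, both the indicator $\mathbbm{1}_{e\in\gamma}$ and the random winding $W_\gamma(e,e_b)$ simplify dramatically.

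First I would argue that the event $\{e\in\gamma\}$ coincides with $\{u\longleftrightarrow\partial_{ba}\}$. This is a combinatorial consequence of the loop representation. By construction, $\gamma$ is the interface that separates the primal cluster containing the wired arc $\partial_{ba}$ from the dual cluster containing the free dual arc $\partial_{ab}^*$. The diamond edge $e$ has one endpoint at the primal vertex $u\in\partial_{ab}$ and its other endpoint at a dual vertex on $\partial_{ab}^*$; the dual endpoint automatically lies in the dual cluster of the free arc, so $\gamma$ crosses $e$ if and only if the primal endpoint $u$ belongs to the primal cluster of $\partial_{ba}$, which is precisely the event $u\longleftrightarrow\partial_{ba}$.

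Second I would argue that on the event $\{e\in\gamma\}$, the winding $W_\gamma(e,e_b)$ is deterministic and equal to $W(e,e_b)$, the winding of any fixed curve on the diamond graph joining $e$ to $e_b$. Recall from the construction of the loop representation that $\gamma$ crosses every diamond edge orthogonally, with the primal endpoint on its left and the dual endpoint on its right. Since $e$ and $e_b$ are boundary diamond edges of a Dobrushin domain, this orientation convention uniquely determines the tangent direction of $\gamma$ at each of $e$ and $e_b$. In the simply connected planar domain $G^\diamond$, any two curves on the diamond graph joining $e$ to $e_b$ with these prescribed initial and final tangent directions are homotopic through such curves, and total rotation is invariant under such homotopies; hence all such curves share the same winding, which we denote $W(e,e_b)$.

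Combining the two observations, the random variable ${\rm e}^{\sigma W_\gamma(e,e_b)}\mathbbm{1}_{e\in\gamma}$ equals the deterministic quantity ${\rm e}^{\sigma W(e,e_b)}\mathbbm{1}_{u\longleftrightarrow\partial_{ba}}$ pointwise on the sample space, so taking $\phi_{G,\beta,q}^{a,b}$-expectations yields the desired identity. The one subtlety is the path-independence of the winding stated above; this is a standard fact about curves between two fixed tangent-directed endpoints in a simply connected planar domain, and I would either dispatch it by a short topological remark or take it for granted as part of the setup of the loop representation.
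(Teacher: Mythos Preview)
Your proposal is correct and follows essentially the same two-step approach as the paper: first identify $\{e\in\gamma\}$ with $\{u\leftrightarrow\partial_{ba}\}$ via the interface interpretation of $\gamma$, then argue that the winding $W_\gamma(e,e_b)$ is deterministic for a boundary edge $e$. The only difference is one of phrasing in the second step: the paper simply observes that since $e$ lies on the boundary, the exploration path cannot wind around it, whereas you give a slightly more elaborate homotopy justification in the simply connected domain; both capture the same topological fact.
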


\begin{proof}
  Let $u$  be a site of  the free arc $\partial_{ab}$  and recall that
  the  exploration path  is  the interface  between  the open  cluster
  connected to  the wired arc and  the dual open cluster  connected to
  the free arc. Since $u$ belongs to the free arc, $u$ is connected to
  the wired arc if and only if $e$ belongs to the exploration path, so
  that
  $$\phi_{G,\beta,q}^{a,b}(u\leftrightarrow     \text{wired    arc
  }\partial_{ba})=\phi_{G,\beta,q}^{a,b}(e\in               \gamma).$$
  The edge $e$ being on the boundary, the exploration path cannot wind
  around it, so  that the winding of the curve  is deterministic. Call
  this winding $W(e,e_b)$. We deduce from this remark that
  \begin{align*}
    F(e)=\phi_{G,\beta,q}^{a,b} ({\rm e}^{\sigma W_\gamma(e,e_b)}
    \mathbbm{1}_{e\in \gamma}) &= {\rm e}^{\sigma
      W(e,e_b)}\phi_{G,\beta,q}^{a,b}(e\in \gamma)={\rm e}^{\sigma
      W(e,e_b)}\phi_{G,\beta,q}^{a,b}(u\leftrightarrow \text{wired
      arc }\partial_{ba}).
  \end{align*}
\end{proof}

We are now in a position to prove our key proposition.

\begin{proposition}\label{prop:14}
  Fix  $q>4$ and  $G_\infty$ satisfying  (BAP$_\theta$). There  exists
  $C_2=C_2(q,\theta)<\infty$ such that
  $$\sum_{e\in \partial \calE[G^\diamond]}F(e)\le C_2{\rm e}^{\sigma
    (W_{\rm                    max}-W_{\rm                    min})}$$
  for any  $\beta\le 1$  and any  Dobrushin domain  $(G,0,0)$ (here,  $0$ is  assumed  to belong  to $\partial  G$).
  Above, $W_{\rm min}$  and $W_{\rm max}$ are the  minimal and maximal
  winding when going along the boundary of $G$, starting from $0$.
\end{proposition}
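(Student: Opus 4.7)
The plan is to use Lemma~\ref{boundary} to rewrite $F(e)$ on each boundary edge as a product of a deterministic winding factor and a connection probability, then to convert the target sum into an expectation over the exploration path $\gamma$, and finally to establish a purely deterministic geometric bound exploiting the self-avoidance of $\gamma$ together with (BAP$_\theta$). First I would apply Lemma~\ref{boundary} to each $e\in\partial\calE[G^\diamond]$ incident to a primal vertex $u$ on the free arc: with $a=b=0$, the wired arc $\partial_{ba}$ degenerates to $\{0\}$, giving $F(e)=e^{\sigma W(e,e_b)}\phi^{0,0}_{G,\beta,q}(u\longleftrightarrow 0)$. Since $\{u\longleftrightarrow 0\}=\{e\in\gamma\}$ in the loop representation, summing over boundary edges yields
\[
\sum_{e\in\partial\calE[G^\diamond]}F(e)=\mathbb{E}\!\left[\sum_{e\in\gamma\cap\partial\calE[G^\diamond]}e^{\sigma W(e,e_b)}\right].
\]

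The next and main step is to prove the realization-wise bound
\[
\sum_{e\in\gamma\cap\partial\calE[G^\diamond]}e^{\sigma W(e,e_b)}\le C_2\,e^{\sigma(W_{\max}-W_{\min})}
\]
for some $C_2=C_2(q,\theta)$. I would exploit three ingredients: (i) $\gamma$ is self-avoiding, so each boundary diamond edge is traversed at most once; (ii) on boundary edges the dynamical winding $W_\gamma(\cdot,e_b)$ equals the deterministic boundary winding $W(\cdot,e_b)\in[W_{\min},W_{\max}]$ (Lemma~\ref{boundary}); and (iii) under (BAP$_\theta$), the turn angle between consecutive diamond edges along $\gamma$ lies in $[-(\pi-\theta),\pi-\theta]$. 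These should combine to give the sequence of windings visited along $\gamma\cap\partial\calE[G^\diamond]$ a geometric-series structure dominated by the terms near $W_{\max}$, so that the sum is controlled by $\mathrm{const}\cdot e^{\sigma W_{\max}}\le\mathrm{const}\cdot e^{\sigma(W_{\max}-W_{\min})}$, using that $W_{\min}\le W(e_b,e_b)=0\le W_{\max}$. Taking expectation then concludes.

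The hard part will be the deterministic geometric estimate. Along flat stretches of $\partial G$ --- for instance, straight sides of a large rectangular domain on $\bbZ^2$ --- many consecutive boundary diamond edges share essentially the same winding, so naively bounding $\phi(u\longleftrightarrow 0)\le 1$ and summing $e^{\sigma W(e,e_b)}$ over all boundary edges is far too crude and can blow up with the domain size. The delicate point is thus to use the self-avoidance of $\gamma$ together with (BAP$_\theta$) to ensure that the boundary visits actually made by $\gamma$ are distributed across the winding range rather than piled up at a single value, and to handle the excursions of $\gamma$ between the interior and the boundary so that the sum telescopes into a convergent geometric series with a constant $C_2$ depending only on $q$ (through $\sigma$) and $\theta$, uniformly in $\beta\le 1$ and in the domain $G$. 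This geometric/combinatorial bookkeeping is the core content of the proof.
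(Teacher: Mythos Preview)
Your realization-wise bound
\[
\sum_{e\in\gamma\cap\partial\calE[G^\diamond]}e^{\sigma W(e,e_b)}\le C_2\,e^{\sigma(W_{\max}-W_{\min})}
\]
is false in general, and this is not merely the ``hard part'' but an actual obstruction. Take $G$ to be an $n\times n$ box in $\bbZ^2$ with $0$ at a corner, and consider the all-open configuration. Then every boundary vertex is connected to $0$, so $\gamma$ traverses every boundary diamond edge; along each straight side there are order $n$ such edges all sharing essentially the same winding. The left-hand side is therefore of order $n$, while $W_{\max}-W_{\min}$ stays bounded (of order $2\pi$) uniformly in $n$. Self-avoidance of $\gamma$ does not help: $\gamma$ visits each boundary edge at most once, but the \emph{number} of boundary edges at a given winding value is not controlled by anything in your setup.

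The paper avoids this by never attempting a pathwise bound. Instead it works at $\beta=1$ and sums the local relation of Lemma~\ref{relation_around_a_vertex} over all faces of $G^\diamond$. Because $\Lambda(x_e)=1$ at criticality, all interior contributions cancel and one is left with the exact identity $\sum_{e\text{ exiting}}F(e)=\sum_{e\text{ entering}}F(e)$. Grouping by boundary vertex $u$ and using Lemma~\ref{boundary}, this becomes
\[
\sum_{u\in\partial G\setminus\{0\}}\bigl[e^{\sigma W(e_{\rm ex}(u),e_b)}-e^{\sigma W(e_{\rm in}(u),e_b)}\bigr]\phi^0_{G,1,q}(u\leftrightarrow 0)=e^{\sigma W(e_a,e_b)}-1,
\]
and the point is that the bracket is \emph{positive and bounded below} by $(e^{\sigma\theta}-1)e^{\sigma W_{\min}}$ thanks to (BAP$_\theta$). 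This immediately controls $\sum_u\phi^0_{G,1,q}(u\leftrightarrow 0)$ itself---precisely the quantity your naive bound could not handle---and the estimate for $\sum_e F(e)$ follows. The case $\beta<1$ is then obtained by monotonicity. The missing idea in your plan is this use of the critical identity to produce a signed telescoping sum that bounds the connection probabilities directly, rather than trying to control the weighted sum path by path.
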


Note  that Dobrushin  boundary conditions  on $(G,0,0)$  coincide with
free boundary conditions.

\begin{proof}
  Let   us    start   with   the   case    $\beta=1$.   Sum   Identity
  \eqref{rel_vertex} over all $\calF^\diamond$. Since $\Lambda(x_e)=1$
  for any  $e$, we find that  $c(e)=0$ for any $e\in  E_{\rm int}$ and
  $c(e)=\pm 1$ on $\partial E[G^\diamond]$  depending on the fact that
  a loop going through $e$ points outwards or inward $\calF^\diamond$.
  Boundary edges corresponding  to a loop pointing  outward are called
  \emph{exiting},  those  for  which   the  loop  is  pointing  inward
  \emph{entering}. We find
  $$\sum_{e\text{~exiting}}F(e)~-\sum_{e\text{~entering}}F(e)=0.$$
  Since      edges     exiting      or     entering      belong     to
  $\partial\calE[G^\diamond]$, Lemma~\ref{boundary} implies that
  $$F(e)={\rm e}^{\sigma W(e,e_b)}\phi^0_{G,1,q}
  [u\longleftrightarrow 0],$$
  for  $u$ the  site  of  $G$ bordering  $e$.  Note  that each  vertex
  $u\in \partial G$ is the end-point of a unique entering edge, called
  $e_{\rm in}(u)$ and a unique exiting edge $e_{\rm ex}(u)$. With this
  definition and the two previous displayed equalities, we find
  $$\sum_{u\in \partial G} \left[{\rm e}^{\sigma W(e_{\rm
        ex}(u),e_b)}-{\rm e}^{\sigma W(e_{\rm
        in}(u),e_b)}\right]\phi^0_{G,1,q}(u\longleftrightarrow 0)=0$$
  which can be rewritten as
  $$\sum_{u\in \partial G\setminus\{0\}} \left[{\rm e}^{\sigma
      W(e_{\rm       ex}(u),e_b)}-{\rm       e}^{\sigma       W(e_{\rm
        in}(u),e_b)}\right]\phi^0_{G,1,q}(u\longleftrightarrow 0)={\rm
    e}^{\sigma   W(e_{\rm    in}(0),e_b)}-{\rm   e}^{\sigma   W(e_{\rm
      ex}(0),e_b)}.$$
  Now, when $u=0$, $e_{\rm  in}(0)=e_a$ and $e_{\rm ex}(0)=e_b$. Since
  $W(e_a,e_b)\le 2\pi$,
  $${\rm e}^{\sigma W(e_{\rm in}(0),e_b)}-{\rm e}^{\sigma W(e_{\rm
      ex}(0),e_b)}={\rm    e}^{\sigma   W(e_a,e_b)}-{\rm    e}^{\sigma
    W(e_b,e_b)}\le {\rm e}^{\sigma2\pi}.$$ When $u\ne 0$, we find that
  $$W(e_{\rm ex}(u),e_b)-W(e_{\rm in}(u),e_b)=W(e_{\rm ex}(u),e_{\rm
    in}(u))\ge                               \theta_{[uv]}\ge\theta,$$
  where $v$  is a neighbor  of $u$ outside of  $G$, and $[uv]$  is the
  edge between $u$  and $v$. Note that the existence  of this neighbor
  is guaranteed by the fact that $u\in \partial G$. Next,
  $${\rm e}^{\sigma W(e_{\rm ex}(u),e_b)}-{\rm e}^{\sigma W(e_{\rm
      in}(u),e_b)}=\left[{\rm   e}^{\sigma    W(e_{\rm   ex}(u),e_{\rm
        in}(u))}-1\right]{\rm   e}^{\sigma  W(e_{\rm   in}(u),e_b)}\ge
  [{\rm   e}^{\sigma   \theta}-1]{\rm   e}^{\sigma   W_{\rm   min}}.$$
  Therefore,
  $$[{\rm e}^{\sigma \theta}-1]{\rm e}^{\sigma W_{\rm
      min}}\sum_{u\in             \partial            G\setminus\{0\}}
  \phi^0_{G,1,q}(u\longleftrightarrow  0)\le {\rm  e}^{\sigma 2\pi}.$$
  Finally, observe that
  \begin{align*}
    \sum_{e\in \partial\calE[G^\diamond]}F(e) &=
    \sum_{e\in \partial\calE[G^\diamond]}{\rm
      e}^{\sigma W(e,e_b)}\phi^0_{G,1,q}[u\longleftrightarrow0]\\
    &\le 4{\rm e}^{\sigma W_{\rm max}}\sum_{u\in \partial
      G}\phi^0_{G,1,q}[u\longleftrightarrow0]\le4 {\rm e}^{\sigma
      W_{\rm max}}+4{\rm e}^{\sigma W_{\rm max}}\sum_{u\in \partial
      G\setminus\{0\}}\phi^0_{G,1,q}[u\longleftrightarrow0]\\
    &\le 4{\rm e}^{\sigma W_{\rm max}}+4\frac{{\rm e}^{\sigma (W_{\rm
          max}-W_{\rm min})}}{{\rm e}^{\sigma \theta}-1}{\rm
      e}^{\sigma 2\pi}.
  \end{align*}
  In the  first inequality, we used  the fact that at  most four edges
  correspond to a boundary vertex.

  The     case     $\beta\le      1$     follows     readily     since
  $F(e)={\rm                                                e}^{\sigma
    W(e,e_b)}\phi^0_{G,\beta,q}[u\longleftrightarrow0]$
  is an increasing quantity in $\beta$.
\end{proof}

For  a  graph  $G$,  let   us  introduce  the  following,  recursively
constructed         graphs.         Let        $G^{(0)}=G$         and
$G^{(k)}=G^{(k-1)}\setminus \partial G^{(k-1)}$ for any $k\ge 1$. They
can be seen as successive ``pealings'' of $G$, each step consisting in
removing    the    boundary    of     the    existing    graph.    Let
$E_k=E_{\rm            int}[G^{(k)}]$.            Note            that
$E_0=E_{\rm          int}=\calE[G^\diamond]\setminus          \partial
\calE[G^\diamond]$.

\begin{corollary}
  \label{cor:crucial}
  Let  $G_\infty$ be  an  isoradial  graph satisfying  (BAP$_\theta$).
  Consider the Dobrushin domain $(G,0,0)$ (as above, $0$ is assumed to
  be on the boundary of $G$). For any $\beta<1$ and $q>4$,
  $$\sum_{e\in E_k}F(e)\le C_1C_2{\rm e}^{\sigma (W_{\rm
      max}-W_{\rm min})}\left(\frac{C_1}{1+C_1}\right)^{k}.$$
\end{corollary}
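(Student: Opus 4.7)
The plan is to combine the two structural estimates already established in this section---Proposition~\ref{area-boundary} (area-to-boundary comparison) and Proposition~\ref{prop:14} (boundary bound)---with a straightforward induction on the number of peelings $k$.

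First I would treat the base case $k=0$. Since $E_0=E_{\rm int}=\calE[G^\diamond]\setminus\partial\calE[G^\diamond]$, every edge of $\partial_{\rm e}E_0$ lies in $\partial\calE[G^\diamond]$. Applying Proposition~\ref{area-boundary} with $E=E_0$ and then Proposition~\ref{prop:14} gives
$$\sum_{e\in E_0}F(e)\le C_1\sum_{e\in\partial_{\rm e}E_0}F(e)\le C_1\sum_{e\in\partial\calE[G^\diamond]}F(e)\le C_1C_2\,{\rm e}^{\sigma(W_{\rm max}-W_{\rm min})},$$
which is the stated bound at $k=0$.

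For the inductive step, I aim to establish the one-step contraction
$$\sum_{e\in E_k}F(e)\le \frac{C_1}{1+C_1}\sum_{e\in E_{k-1}}F(e),$$
and iterate. The geometric ingredient is the inclusion $\partial_{\rm e}E_k\subset E_{k-1}\setminus E_k$: an edge $e\in\partial_{\rm e}E_k$ borders some face of $\calF^\diamond[G^{(k)}]$ but does not itself belong to $E_k$, so $e$ must be incident to a vertex of $\partial G^{(k)}$; however, vertices of $\partial G^{(k)}$ lie in the interior of $G^{(k-1)}$ (they are only exposed after peeling off $\partial G^{(k-1)}$), so $e$ lies between two faces of $\calF^\diamond[G^{(k-1)}]$, i.e. $e\in E_{k-1}$. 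With this inclusion in hand, Proposition~\ref{area-boundary} applied to $E=E_k\subset E_{\rm int}$ yields
$$\sum_{e\in E_k}F(e)\le C_1\sum_{e\in\partial_{\rm e}E_k}F(e)\le C_1\Bigl(\sum_{e\in E_{k-1}}F(e)-\sum_{e\in E_k}F(e)\Bigr),$$
and rearranging produces the contraction factor $C_1/(1+C_1)$. Iterating $k$ times from the base case immediately gives the corollary.

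The only non-routine point is the geometric inclusion $\partial_{\rm e}E_k\subset E_{k-1}\setminus E_k$: one must track carefully how successive peelings $G^{(k-1)}\mapsto G^{(k)}=G^{(k-1)}\setminus\partial G^{(k-1)}$ interact with the faces $\calF^\diamond$ and the diamond edges, and rule out degenerate configurations (for instance, diamond edges both of whose endpoints land on $\partial G^{(k)}$) which could a priori break the containment. Everything else---the algebraic rearrangement and the iteration---is essentially bookkeeping.
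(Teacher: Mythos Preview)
Your proof is correct and follows essentially the same route as the paper's: both obtain the one-step contraction $\sum_{E_k}F\le\frac{C_1}{1+C_1}\sum_{E_{k-1}}F$ from Proposition~\ref{area-boundary} together with the inclusion $E_k\cup\partial_{\rm e}E_k\subset E_{k-1}$, iterate down to $E_0=E_{\rm int}$, and then apply Proposition~\ref{area-boundary} once more followed by Proposition~\ref{prop:14}. The only cosmetic difference is that you handle the $k=0$ step first and phrase the remainder as an induction, whereas the paper iterates and treats the boundary estimate last; the paper also simply asserts the inclusion you flag as ``non-routine'' without further comment.
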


\begin{proof}
  Proposition~\ref{area-boundary}      can      be     applied      to
  $E_k\subset E_{\rm int}$ to give
  $$\sum_{e\in E_k}F(e)\le \frac{C_1}{1+C_1}\sum_{e\in
    E_k\cup\partial_{\rm                                 e}E_k}F(e).$$
  Since $E_k\cup\partial_{\rm e}E_k\subset E_{k-1}$ and $F(e)\ge0$,
  $$\sum_{e\in E_k}F(e)\le\frac{C_1}{1+C_1} \sum_{e\in E_{k-1}}F(e).$$
  Using      the       previous      bound       iteratively,      and
  Proposition~\ref{area-boundary}  one   last  time  (in   the  second
  inequality), we find
  \begin{align*}
    \sum_{e\in E_k}F(e) &
                          \le
                          \left(\frac{C_1}{1+C_1}\right)^{k}\sum_{e\in
                          E_{\rm                           int}}F(e)\le
                          C_1\left(\frac{C_1}{1+C_1}\right)^{k}\sum_{e\in \partial
                          \calE[G^\diamond]}F(e)
  \end{align*}
  The claim follows  by bounding the sum on the  right-hand side using
  Proposition~\ref{prop:14}.
\end{proof}

The study  above can be performed  with $\tilde F$ instead  of $F$. We
obtain the following corollary.

\begin{corollary}
  \label{cor:crucial2}
  Let  $G_\infty$ satisfying  (BAP$_\theta$).  Consider the  Dobrushin
  domain $(G,0,0)$ ($0$ is assumed to be  on the boundary
  of $G$). For any $\beta<1$ and $q>4$,
  $$\sum_{e\in E_k}\tilde{F}(e)\le C_1C_2{\rm e}^{\sigma
    (W_{\rm max}-W_{\rm min})}\left(\frac{C_1}{1+C_1}\right)^{k}.$$
\end{corollary}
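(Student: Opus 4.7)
The object $\tilde F$ is obtained from $F$ by replacing $\sigma$ with $-\sigma$ in the weighting of the winding, so the plan is to re-run the entire chain leading to Corollary~\ref{cor:crucial} — namely Lemma~\ref{relation_around_a_vertex}, Proposition~\ref{area-boundary}, Lemma~\ref{boundary}, Proposition~\ref{prop:14}, and the pealing recursion — after this sign flip, and verify that each step survives. Redoing the involution-based computation of Lemma~\ref{relation_around_a_vertex} with $e^{-\sigma W_\gamma(\cdot,e_b)}$ in place of $e^{\sigma W_\gamma(\cdot,e_b)}$ transforms the contributions in the table by $\sigma\to-\sigma$, and since the identity $e^{\sigma\pi/2}+e^{-\sigma\pi/2}=\sqrt q$ is preserved, one obtains
$$\tilde F(B)+\tilde F(D)=\tilde\Lambda_e(\beta x_e)\left[\tilde F(A)+\tilde F(C)\right],\qquad \tilde\Lambda_e(x)=e^{\sigma(\pi-\theta_e)}\,\frac{x+e^{-\sigma\pi/2}}{x+e^{\sigma\pi/2}}.$$
The crucial check for the next step is that $\tilde\Lambda_e(x_e)=1$. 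Solving $\Lambda_e(x_e)=1$ yields $x_e=\sinh(\sigma\theta_e/2)/\sinh(\sigma(\pi-\theta_e)/2)$, manifestly invariant under $\sigma\to-\sigma$, so indeed $\tilde\Lambda_e(x_e)=1$.

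With this identity, Proposition~\ref{area-boundary} extends verbatim: summing the new local relation over the faces bordered by $E$ gives an identity of the form \eqref{eq:sum}; the interior coefficient $\tilde\Lambda_e(\beta x_e)-1$ admits an analogous factorization and, by the bounded-angle property, is bounded away from zero uniformly, while the boundary coefficients are bounded above uniformly. This yields the same bulk-to-boundary inequality for $\tilde F$, with constant still denoted $C_1$. The boundary analog of Lemma~\ref{boundary} is immediate: $\tilde F(e)=e^{-\sigma W(e,e_b)}\phi^{a,b}_{G,\beta,q}(u\leftrightarrow\partial_{ba})$ on $\partial\calE[G^\diamond]$. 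The only place where genuine care is needed is Proposition~\ref{prop:14}: telescoping the entering/exiting identity produces $\sum_{u}[e^{-\sigma W(e_{\rm ex}(u),e_b)}-e^{-\sigma W(e_{\rm in}(u),e_b)}]\phi^0_{G,1,q}(u\leftrightarrow 0)=0$, and with the sign in the exponent flipped the inequality $W(e_{\rm ex}(u),e_b)-W(e_{\rm in}(u),e_b)\ge\theta$ now gives $e^{-\sigma W(e_{\rm in}(u),e_b)}-e^{-\sigma W(e_{\rm ex}(u),e_b)}\ge(1-e^{-\sigma\theta})e^{-\sigma W_{\max}}>0$, while the exceptional term at $u=0$ is bounded by $1$ in absolute value. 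Rearranging as in the original proof (the roles of $W_{\min}$ and $W_{\max}$ are effectively exchanged, but the bound $C_2 e^{\sigma(W_{\max}-W_{\min})}$ is preserved because $-W_{\min}\le W_{\max}-W_{\min}$) yields the $\tilde F$-analog of Proposition~\ref{prop:14}. This sign bookkeeping is the only nontrivial point in the whole argument.

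With the three analogs in place, Corollary~\ref{cor:crucial2} follows by exactly the pealing recursion of Corollary~\ref{cor:crucial}: apply the new bulk-to-boundary inequality to $E_k\subset E_{k-1}$, iterate $k$ times to accumulate a factor $(C_1/(1+C_1))^k$, and close with the new boundary estimate to produce the prefactor $C_1 C_2 e^{\sigma(W_{\max}-W_{\min})}$. Everything outside of the sign tracking in the analog of Proposition~\ref{prop:14} is obtained literally from the $F$ case by the substitution $\sigma\to-\sigma$.
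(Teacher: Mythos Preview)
Your proof is correct and follows exactly the approach the paper indicates: the paper's own argument for this corollary is the single sentence ``The study above can be performed with $\tilde F$ instead of $F$,'' and you have carried out precisely that substitution $\sigma\mapsto-\sigma$ step by step. Your attention to the sign bookkeeping in the analog of Proposition~\ref{prop:14} (where the inequality $W(e_{\rm ex}(u),e_b)-W(e_{\rm in}(u),e_b)\ge\theta$ now forces the \emph{entering} term to dominate, swapping the roles of $W_{\min}$ and $W_{\max}$) is the one place requiring genuine care, and you handle it correctly; one could add that in the analog of Proposition~\ref{area-boundary} the interior coefficient $\tilde\Lambda_e(\beta x_e)-1$ is now \emph{negative} (since $\tilde\Lambda_e=1/\Lambda_e<1$), but the inequality $\sum_{E}\tilde F\le C_1\sum_{\partial_{\rm e}E}\tilde F$ follows just the same after taking absolute values, so this does not affect the argument.
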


\section{Proof of Theorem~\ref{exp_decay}}
\label{sec:proof}

Without loss  of generality, we  assume that $v=0$. Fix  $\beta<1$. We
aim to prove that there exists $c=c(\beta,q,\theta)>0$ such that
$$\phi^0_{G_\infty,\beta,q}(0\longleftrightarrow u)\le \exp(-c|u|)$$
for  any  $u\in  G_\infty$  containing  $0$.  We  now  fix  $G_\infty$
containing 0  and satisfying  (BAP$_\theta$). We  stress out  that the
constants involved  in the proof  depend on  $\theta$ only but  not on
$G_\infty$  or  $u$.  \bigbreak  The case  $q=4$  is  derived  through
stochastic  domination between  random cluster  measures. Indeed,  for
every $\beta<1$,  there exists $(\beta',q)$ with  $q>4$ and $\beta'<1$
such  that  the  random-cluster  measure  $\phi^0_{G_\infty,\beta',q}$
stochastically      dominates      the     random-cluster      measure
$\phi^0_{G_\infty,\beta,4}$. We refer  to \cite[Theorem (3.23)]{Gri06}
for details on  this fact. It follows from  this stochastic domination
that
$$\phi^0_{G_\infty,\beta,4}(0\longleftrightarrow u)\le
\phi^0_{G_\infty,\beta',q}(0\longleftrightarrow                  u)\le
\exp\big[-c(\beta',q,\theta)|u|\big]$$
for any  $u\in G_\infty$.  It is therefore  sufficient to  assume that
$q>4$, which we now do. \medbreak  For a graph $G_\infty$, we identify
a  convex subset  $A$  of $\mathbb  R^2$ with  the  subgraph given  by
vertices  in  $G_\infty\cap  A$  and   edges  between  them.  The  set
$\partial A$  is referring to  $\partial (G_\infty\cap A)$.  Note that
this set is  not necessarily connected, but this will  not be relevant
in the following. For $r>0$, let $B_0(r)=\{x\in \bbR^2:|x|< r\}$.

\begin{lemma}
  \label{lem:proof}
  There     exists     $c_1=c_1(\beta,q,\theta)>0$.    Assume     that
  $v\in G_\infty$ is on the positive real axis. Then,
  $$\phi^0_{[0,\infty)\times(-\infty,\infty),\beta,q}
  (0\longleftrightarrow v)\le \exp[-c_1|v|].$$
\end{lemma}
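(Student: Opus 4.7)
The plan is to invoke the observable bounds of Corollaries~\ref{cor:crucial} and~\ref{cor:crucial2} in the Dobrushin domain $(G_R, 0, 0)$ (whose boundary conditions coincide with the free ones) where $G_R = B_0(R) \cap ([0,\infty)\times\bbR)$ for some $R \geq 2|v|$, and then translate exponential decay of the observable sum into exponential decay of the connectivity via a topological dichotomy. Since $\phi^0_{[0,\infty)\times\bbR, \beta, q}$ is the increasing limit of the finite-volume measures $\phi^0_{G_R, \beta, q}$, it suffices to bound $\phi^0_{G_R}(0 \leftrightarrow v)$ uniformly in $R$.

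Let $\gamma$ denote the exploration path of $(G_R, 0, 0)$, which bounds the cluster $\calK$ of $0$. The pointwise inequality $\phi^0_{G_R}(e \in \gamma) \le \tfrac12(F(e) + \tilde F(e))$, which follows from $\cosh \geq 1$, together with Corollaries~\ref{cor:crucial} and~\ref{cor:crucial2} (the winding prefactor $e^{\sigma(W_{\rm max}-W_{\rm min})}$ being uniformly bounded for the disk-shaped $G_R$), gives
\[
  \sum_{e \in E_k} \phi^0_{G_R}(e \in \gamma) \le C_3\, \rho^k,
  \qquad \rho := \frac{C_1}{1+C_1} < 1,
\]
with a constant $C_3 = C_3(\beta,q,\theta)$ independent of $R$.

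The key geometric input is as follows. Let $\calC$ be a cycle of diamond edges encircling $v$ at Euclidean distance of order $|v|/2$; by the bounded-angle property, the peeling depth of every edge of $\calC$ is at least $c|v|$ for some $c = c(\theta) > 0$. On the event $\{0 \leftrightarrow v\}$, $v$ lies in $\calK$, hence in the region bounded by $\gamma$ in the closed half-plane. By planar topology, either (i) $\gamma$ traverses an edge of $\calC$, in which case $\gamma$ possesses an edge in $E_{c|v|}$ whose occurrence probability is controlled by the observable bound; or (ii) $\calC$ lies strictly in the interior of $\gamma$, in which case $\calK$ contains every primal vertex enclosed by $\calC$, and in particular a vertex $v'$ on the positive real axis with $|v'| \ge \tfrac{3}{2}|v| - O(1)$.

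Writing $f_R(r) := \phi^0_{G_R}(0 \leftrightarrow (r,0))$, the dichotomy reads $f_R(|v|) \leq C_3 \rho^{c|v|} + f_R(\tfrac{3}{2}|v|)$. Iterating along the positive real axis, with $f_R(r) = 0$ once $r > R$, yields $f_R(|v|) \le \sum_{k \ge 0} C_3 \rho^{c(3/2)^k |v|} \le C_4 \rho^{c|v|}$ because the exponents grow super-linearly in $k$. Sending $R \to \infty$ completes the proof with $c_1 = c \log(1/\rho)$. The main obstacle is making the topological dichotomy quantitative in the discrete setting: one must produce a diamond cycle $\calC$ whose edges simultaneously lie at large peeling depth \emph{and} which genuinely encloses $v$ in the loop representation, and the Euclidean-to-peeling-depth conversion (which enters both in the choice of $\calC$ and in the iteration) is exactly where the bounded-angle property is used.
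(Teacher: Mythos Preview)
Your dichotomy has a real gap in case~(ii). The assertion that ``$\calK$ contains every primal vertex enclosed by $\calC$'' is false: the cluster of $0$ can have dual holes, so the region enclosed by its outer boundary $\gamma$ (the \emph{filled} cluster) is in general strictly larger than $\calK$ itself. Hence, even when $\calC$ lies entirely inside $\gamma$, the specific vertex near $(\tfrac32|v|,0)$ need not lie in $\calK$, and the recursion $f_R(|v|)\le C_3\rho^{c|v|}+f_R(\tfrac32|v|)$ for $f_R(r)=\phi^0_{G_R}(0\leftrightarrow(r,0))$ is unjustified. If you try to repair this by tracking the filled cluster $\widehat\calK$ instead of $\calK$, the recursion for $g_R(r)=\phi^0_{G_R}\big((r,0)\in\widehat\calK\big)$ still does not close: once the iterate $r_k$ becomes comparable to $R$, the cycle $\calC_k$ comes within $O(1)$ of the circular part of $\partial G_R$, its peeling depth collapses, and the observable no longer gives $C_3\rho^{cr_k}$ for that step; the tail of the series is then of order~$1$, uniformly in $R$. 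Taking $R\to\infty$ before iterating is circular, since it presupposes the absence of an infinite cluster in the half-plane --- essentially what you are trying to prove.

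The paper sidesteps both issues by working in rectangles $[0,n]\times[-m,m]$ and first sending $m\to\infty$ to obtain a strip $[0,n]\times\bbR$. A short finite-energy argument (the strip is rough-isometric to $\bbZ$) shows that the strip carries no infinite cluster for any $\beta$. In the strip the peelings $G^{(k)}$ are themselves infinite sub-strips, and a \emph{finite} loop enclosing a point $u\in G^{(k)}$ must cross the vertical line through $u$, hence must itself enter $G^{(k)}$. This yields, with no iteration, the inclusion $\{0\leftrightarrow v\}\subset\{\partial_{\rm out}\calK\text{ meets }G^{(k)}\}$ for $k\sim|v|/2$, and the observable bound of Corollaries~\ref{cor:crucial} and~\ref{cor:crucial2} concludes directly. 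The passage through the strip (rather than a half-disk) is precisely what makes the topological step go through.
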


There could  be no vertex  $v$ on the positive  axis, but there  is no
loss of generality in assuming that $v$ belongs to this positive axis,
since the graph  $G_\infty$ can be rotated around the  origin in order
to obtain an estimate valid for  any vertex $v\in G_\infty$, where the
half-plane  $[0,\infty)\times(-\infty,\infty)$  is   replaced  by  a
half-plane containing $v$ whose boundary  contains 0 and is orthogonal
to the vector given by the coordinates of $v$.


\begin{proof}
  Define  $G_{m,n}$ to  be the  connected component  of the  origin in
  $[0,n]\times[-m,m]$.   Set    $k=|v|/2$.   Let   $m\ge    100$   and
  $n>  \max\{2|v|,100\}$  (these  two  conditions  avoid  nasty  local
  problems).            Apply            Corollaries~\ref{cor:crucial}
  and~\ref{cor:crucial2}  to $G_{m,n}$  and $k$  to find  (we use  the
  notation of the corollary)
  $$\sum_{e\in E_k}F(e)+\tilde F(e)\le 2C_1C_2{\rm
    e}^{\sigma                   (W_{\rm                   max}-W_{\rm
      min})}\left(\frac{C_1}{1+C_1}\right)^{k}.$$
  The  maximum  and  minimum  windings on  $\partial  E[G_{m,n}]$  are
  bounded  by a  certain constant  $C_3<\infty$. This  statement comes
  from the fact  that the winding of $\partial  E[G_{m,n}]$ is roughly
  comparable  of the  winding of  the boundary  of $[0,n]\times[-m,m]$
  (there are a  few local effects to  take care of). Let  us sketch an
  argument.  There exists  a path  of adjacent  faces of  the subgraph
  $H_{m,n}=[0,n]\times[-m,m]\setminus    [4,n-4]\times[-m+4,m-4]$   of
  $G_{m,n}$ going around $[4,n-4]\times[-m+4,m-4]$. The constant 4 has
  been chosen to fit our purpose. It can probably be improved but this
  would   be  of   no   interest  for   the   proof.  In   particular,
  $\partial E[G_{m,n}]$  is contained in $H_{m,n}$.  The bounded angle
  property shows that  two vertices cannot be arbitrary  close to each
  other,  which   prevents  the  existence   of  paths  of   edges  in
  $E[H_{m,n}]$ winding arbitrary often around  a point of $\bbR^2$. As
  a  consequence, the  winding along  $\partial E[G_{m,n}]$  is indeed
  bounded by a universal constant.

  The previous bounds on $W_{\rm max}$ and
  $W_{\rm min}$ imply
  $$\sum_{e\in E_k}F(e)+\tilde F(e)\le
  C_4\left(\frac{C_1}{1+C_1}\right)^{k}.$$ Next, observe that 
  $$F(e)+\tilde F(e)=\phi^0_{G_{m,n},\beta,q}\left[({\rm e}^{\sigma
      W(e,e_b)}+{\rm      e}^{-\sigma      W(e,e_b)})\mathbbm{1}_{e\in
      \gamma}\right]\ge    2\phi^0_{G_{m,n},\beta,q}(e\in   \gamma).$$
  Let   $\mathcal   C_0$   be   the  cluster   of   the   origin   and
  $\partial_{\rm out} \calC_0$ its outer  boundary, meaning the set of
  vertices connected to the free arc by a path in $E[G_\infty]\setminus E[\calC_0]$. A
  diamond  edge $e=[vy]$,  where $v\in  G_{m,n}$ and  $y\in G_{m,n}^*$
  belongs to $\gamma$ if and only if  0 is connected to $v$ and $y$ is
  connected  to the  free arc.  In other  words, $v$  is on  the outer
  boundary of the cluster of 0. We deduce that
  \begin{align*}
    \phi^0_{G_{m,n},\beta,q}(\partial_{\rm out} \calC_0\cap
    G_{m,n}^{(k)}\ne\emptyset)&\le \sum_{u\in
      G_{m,n}^{(k)}}\phi^0_{G_{m,n},\beta,q}(u\in \partial\mathcal
    C_0)\\
    &\le \sum_{e\in E_{\rm int}^{(k-1)}}F(e)+\tilde F(e)\\
    &\le C_4\left(\frac{C_1}{1+C_1}\right)^{k-1}.
  \end{align*}
  Letting $m$ go to infinity and using the uniform bound above,
  $$\phi^0_{G_{\infty,n},\beta,q}(\partial_{\rm out} \calC_0\cap
  G_{\infty,n}^{(k)}\ne                                  \emptyset)\le
  C_5\left(\frac{C_1}{1+C_1}\right)^{k},$$
  where $G_{\infty,n}=[0,n]\times(-\infty,\infty)$. Next, observe that
  $G_{\infty,n}$    does   not    contain    any   infinite    cluster
  $\phi_{G_{\infty,n},\beta,q}$-almost surely  (in fact, this  is true
  for  any $\beta'<\infty$,  since  the graph  is  rough isometric  to
  $\mathbb Z$). Let us provide a rigorous proof of this statement. The
  experienced reader can skip the next paragraph.

  The bounded angle condition implies that the weight $x_e$ is bounded
  from  above   uniformly  on   $G_\infty$.  Therefore,   there  exits
  $c_2=c_2(\beta,q,\theta)>0$ such that for any finite set $S$ of
  dual edges of cardinality $r$
  $$\phi^0_{G_{\infty,n},\beta,q}(\text{every edge in $S$ is
    dual-open}~|~\mathcal   F_{\calE[G_{\infty,n}^*]\setminus   S})\ge
  \exp[-c_2r].$$
  The   constant   $c_2$  comes   from   the   finite-energy  of   the
  random-cluster model, \emph{i.e.}\ the property that the probability
  for an  edge to be  closed is bounded away  from 0 uniformly  in the
  state  of all  the  other edges;  see \cite[Equation  (3.4)]{Gri06}.
  Next, it is possible to divide  the strip into an infinite number of
  finite pieces by considering disconnecting  paths $P$ of length less
  than   $c_3n$  for   some  constant   $c_3=c_3(\theta)<\infty$.  The
  existence of these paths is easily proved by considering a sequence of
  set of  adjacent faces cutting the  strip, and by noticing  that the
  bounded  angle  property  bounds  from above  the  number  of  edges
  bordering a  face by  $\pi/\sin(\theta/2)$. Now, conditioned  on the
  state of the  others paths, each one of these  paths has probability
  larger than $\exp[-c_2c_3n]>0$ of  being dual-open. This immediately
  implies      that     there      is     no      infinite     cluster
  $\phi_{G_{\infty,n},\beta,q}$-almost surely.

  Since there  is no  infinite cluster  almost surely,  $\mathcal C_0$
  intersects     $G_{\infty,n}^{(k)}$      if     and      only     if
  $\partial_{\rm   out}   \calC_0$  intersects   $G_{\infty,n}^{(k)}$.
  Furthermore, edges  have length smaller  than 2, which  implies that
  $u\in G_{\infty,n}^{(k)}$  (we use the  fact that $n-|v|$  and $|v|$
  are larger than $2k$). Hence,
  $$\phi^0_{G_{\infty,n},\beta,q}(0\longleftrightarrow v)\le
  \phi^0_{G_{\infty,n},\beta,q}(\mathcal C_0\cap G_{\infty,n}^{(k)}\ne
  \emptyset)=\phi^0_{G_{\infty,n},\beta,q}(\partial\mathcal    C_0\cap
  G_{\infty,n}^{(k)}\ne                                  \emptyset)\le
  C_5\left(\frac{C_1}{1+C_1}\right)^{k}.$$
  The proof follows by letting $n$ go to infinity and then by choosing
  $c_1=c_1(\beta,q,\theta)>0$ small enough.
\end{proof}

\begin{figure}[h]
  \begin{center}
    \includegraphics[width=0.50\textwidth]{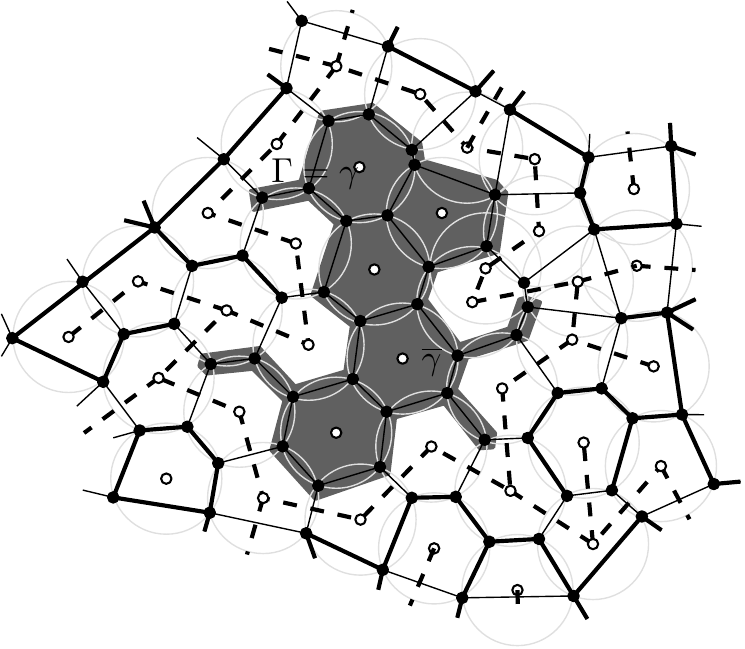}
  \end{center}
  \caption{The  gray  area  is   $\overline{\gamma}$.  The  dual  path
    surrounding it is $\Gamma$. It is the exterior-most dual circuit.}
  \label{fig:isoradial2}
\end{figure}

We  are  now  in  a position  to  prove  Theorem~\ref{exp_decay}.  Let
$n>|u|+2$. We  work with the  random-cluster measure on  $B_0(n)$ with
free  boundary   conditions.  Let  $X_{\rm   max}$  be  the   site  of
$B_0(n)\cap \mathcal  C_0$ which  maximizes its Euclidean  distance to
the origin (when  several such sites exist, take the  first one for an
arbitrary   indexation   of   sites    in   $G_\infty$).   Note   that
$|u|\le |X_{\rm max}|< n$. Therefore,
\begin{align}
  \phi^0_{B_0(n),\beta,q}(0\longleftrightarrow
  u)&\le\phi^0_{B_0(n),\beta,q}(\exists v\in B_0(n)\setminus
  B_0(|u|):X_{\rm max}=v)\nonumber\\
  &\le \sum_{v\in B_0(n)\setminus
    B_0(|u|)}\phi^0_{B_0(n),\beta,q}(X_{\rm
    max}=v)\label{eq:h}.
\end{align}
For $v$, let  $\mathfrak{C}(v)$ be the set  of dual-open self-avoiding
circuits $\gamma$  surrounding the  origin and $v$  and such  that any
site  of  $B_0(n)$  surrounded  by  $\gamma$  is  in  $B_0(|v|)$.  Let
$\overline{\gamma}$  be the  set of  sites of  $B_0(n)$ surrounded  by
$\gamma\in \mathfrak  C(v)$, see  Fig.~\ref{fig:isoradial2}. Dual-open
circuits in $\mathfrak{C}(v)$ are  naturally ordered via the following
order  relation:   $\gamma$  is   more  exterior  than   $\gamma'$  if
$\overline{\gamma}\subset \overline{\gamma}'$.

If $X_{\rm  max}=v$, then $v$ is  connected to $0$ and  there exists a
circuit  in  $\mathfrak{C}(v)$ which  is  dual-open  (simply take  the
boundary  of   $\mathcal  C_0$,   drawn  on   the  dual   graph).  Let
$\{\Gamma=\gamma\}$ be  the event  that $\gamma$ is  the exterior-most
dual-open  circuit   in  $\mathfrak   C(v)$.  With   this  definition,
$X_{\rm max}=v$ if and only if $v$  is connected to 0 and there exists
$\gamma\in \mathfrak C(v)$ such that $\Gamma=\gamma$. Therefore
\begin{align*}
  \phi^0_{B_0(n),\beta,q}(X_{\rm
  max}=v)&=\phi^0_{B_0(n),\beta,q}(0\longleftrightarrow
           v\text{~and~}\exists \gamma\in \mathfrak C(v):\Gamma=\gamma)\\
         &=\sum_{\gamma \in \mathfrak
           C(v)}\phi^0_{B_0(n),\beta,q}(0\longleftrightarrow
           v\text{~and~}\Gamma=\gamma)\\
         &=\sum_{\gamma \in \mathfrak
           C(v)}\phi^0_{B_0(n),\beta,q}(0\longleftrightarrow
           v|\Gamma=\gamma)\phi^0_{B_0(n),\beta,q}(\Gamma=\gamma)
\end{align*}
Since  $\Gamma$ is  the  exterior most  circuit in  $\mathfrak{C}(v)$,
$\{\Gamma=\gamma\}$ is  measurable with respect to  dual-edges outside
$\gamma$.   Furthermore,   edges   of  $\gamma$   are   dual-open   on
$\{\Gamma=\gamma\}$,    see   Fig.~\ref{fig:isoradial2}.    Therefore,
conditioned    on    $\{\Gamma=\gamma\}$,     the    measure    inside
$\overline{\gamma}$  is  a  random-cluster model  with  free  boundary
conditions. Hence,
$$\phi^0_{B_0(n),\beta,q}(0\longleftrightarrow
v|\Gamma=\gamma)=\phi^0_{\overline{\gamma},\beta,q}(0\longleftrightarrow
v).$$
Rotate the graph  $B_0(n)$ in such a  way that $v$ is  on the positive
axis. Let  $H=(-\infty,v]\times\mathbb R$. Observe that  by definition
of  $\mathfrak  C(v)$,  $\overline{\gamma}\subset H$.  The  comparison
between boundary conditions leads to
$$\phi^0_{\overline{\gamma},\beta,q}(0\longleftrightarrow
v)\le\phi^0_{H,\beta,q}(0\longleftrightarrow         v\text{        in
}\overline{\gamma})\le  \phi^0_{H,\beta,q}(0\longleftrightarrow  v)\le
\exp[-c_1|v|]$$ by Lemma~\ref{lem:proof}. This implies
$$\phi^0_{B_0(n),\beta,q}(X_{\rm max}=v)\le\sum_{\gamma \in \mathfrak
  C(y)}         \exp[-c_1|v|]\phi^0_{B_0(n),\beta,q}(\Gamma=\gamma)\le
\exp[-c_1|v|].$$
In  the  second  equality,  we  used   the  fact  that  the  union  of
$\{\Gamma=\gamma\}$ for $\gamma\in \mathfrak  C(v)$ is disjoint. Going
back to \eqref{eq:h}, we find
\begin{align*}
  \phi^0_{B_0(n),\beta,q}(0\longleftrightarrow u)&\le \sum_{v\in
    B_0(n)\setminus B_0(|u|)}\exp[-c_1|v|]\\
  &\le \sum_{v\in G_\infty\setminus B_0(|u|)}\exp[-c_1|v|]\\
  &\le \sum_{k\ge 0}|B_0(|u|+k+1)\setminus
  B_0(|u|+k)|\exp[-c_1(|u|+k)]\\
  &\le \sum_{k\ge
    0}|\textstyle\frac{\pi}{\sin(\theta/2)}(|u|+k+1)|
  \displaystyle\exp[-c_1(|u|+k)]\\
  &\le c_4|u|\exp[-c_1|u|].
\end{align*}
In the fourth inequality,  we used  the fact  that the  number of
sites     in    $B_0(r+1)\setminus     B_0(r)$    is     bounded    by
$\frac{\pi}{\sin(\theta/2)}(r+1)$   because  of   the  bounded   angle
property.

The  proof follows  by  letting $n$  go to  infinity  and by  choosing
$c=c(\beta,q,\theta)>0$ small enough.

\begin{remark}
  The main point of the previous proofs, and of our whole approach, is
  that the observable $F$ behaves like a massive-harmonic function. We
  are not able  to make exact computations though, because  we made no
  particular effort to choose our coupling constants with this goal in
  mind (all that matters here is  that the constant $C_1$ is bounded).
  In a  recent work~\cite{BTR15},  Boutillier, de Tilière  and Raschel
  define a  massive Laplacian operator  on isoradial graphs  for which
  they get the explicit rate  of exponential decay of massive-harmonic
  functions. It  would be  interesting to see  whether a  better tuned
  choice of the  $p_e(\beta)$ can lead to  an exactly massive-harmonic
  observable in their sense, as this would lead to sharper asymptotics
  for the two-point function of the model in that case.
\end{remark}

\section{Proofs of the remaining theorems and corollaries}
\label{sec:proof2}

\begin{proof}[Theorem~\ref{main_theorem_weak}]
  Fix  $\beta<1$.  Let  $G_\infty$  be an  infinite  isoradial  graph.
  Without loss of  generality (simply translate the  graph), we assume
  that $y=0\in G_\infty$. For $r>0$, Theorem~\ref{exp_decay} implies
  \begin{align}
    \phi^0_{G_\infty,\beta,q}(0\longleftrightarrow \partial
    B_0(r))&\le \sum_{u\in \partial
      B_0(r)}\phi^0_{G_\infty,\beta,q}(0\longleftrightarrow
    u)\nonumber\\
    &\le \frac{\pi
      r^2}{4\sin(\theta/2)}\exp\big[-c(\beta,q,\theta)(r-2)\big]
    \label{eq:exp_decay2}.
  \end{align}
  In  the  second   inequality,  we  used  the  fact   that  any  site
  $u\in\partial B_0(r)$ is at distance larger than $r-2$ of the origin
  since any primal  edge is of length smaller than  2 (the diameter of
  circles is  2), and that  $u\in \partial  B_0(r)$ is connected  to a
  site outside $B_0(r)$. We also used the fact that the cardinality of
  $B_0(r)$ is smaller than  $\pi r^2/(4\sin(\theta/2))$ since any edge
  of $G_\infty$ corresponds to a face of $G_\infty^\diamond$ of volume
  larger than $4\sin(\theta/2)$ thanks  to the bounded angle property.
  Letting     $r$    go     to     infinity,     we    obtain     that
  $\phi^0_{G_\infty,\beta,q}(0\longleftrightarrow          \infty)=0$.
  \bigbreak Let  us now consider $\phi^1_{G,\beta,q}$  with $\beta>1$.
  For a  dual vertex $y\in  G^*$, let $A(y)$  be the event  that there
  exists a dual-open  circuit surrounding the origin, i.e.\  a path of
  dual-open edges disconnecting $0$ from infinity in $\bbR^2$. Observe
  that the  dual circuit  must go  to distance  $|y|$. Since  the dual
  model is  a random-cluster model  on the dual isoradial  graph, with
  free    boundary    conditions   and    with    $\beta^*=1/\beta<1$,
  \eqref{eq:exp_decay2} implies
  $$\phi^1_{G_\infty,\beta,q}(A(y))\le \frac{\pi
    |y|^2}{4\sin(\theta/2)}\exp\big[-c(1/\beta,q,\theta)(|y|-2)\big]$$
  for any  $y\in G_\infty^*$.  Borel-Cantelli lemma together  with the
  bound  $|G_\infty^*\cap  B_0(r)|\le\frac{\pi  r^2}{4\sin(\theta/2)}$
  implies that
  $$\phi^1_{G_\infty,\beta,q}(\text{there exist infinitely many }y\in
  G_\infty^*:A(y))=0.$$
  This  immediately  implies that  there  exists  an infinite  cluster
  $\phi^1_{G_\infty,\beta,q}$-almost surely. \end{proof}
  
  \begin{proof}[Theorem~\ref{main_theorem}] Let us now turn
  to  the uniqueness  question.  {\bf  This is  the  only place  where
    \emph{a priori} information on uniqueness is used}. Recall that in
  this     case,     $\mathcal      D_{G_\infty,q}$     defined     in
  Proposition~\ref{uniqueness}        is       countable.        Since
  $\phi^0_{G_\infty,\beta,q}=  \phi^1_{G_\infty,\beta,q}$  outside  of
  the countable set $\mathcal D_{G_\infty,q}$, for any $\beta<1$ there
  exists             $\beta<\beta'<1$            such             that
  $\phi^1_{G_\infty,\beta',q}=\phi^0_{G_\infty,\beta',q}$.  We  deduce
  that
  $$\phi^1_{G_\infty,\beta,q}(0\longleftrightarrow \infty)\le
  \phi^1_{G_\infty,\beta',q}(0\longleftrightarrow
  \infty)=\phi^0_{G_\infty,\beta',q}(0\longleftrightarrow \infty)=0.$$
  From     \cite[Theorem     (5.33)]{Gri06},    we     deduce     that
  $\phi^1_{G_\infty,\beta,q}=\phi^0_{G_\infty,\beta,q}$     for    any
  $\beta<1$. Theorem (5.33) is proved in the case of $\bbZ^d$, but the
  proof extends to the context of periodic graphs mutatis mutandis. By
  duality, the  infinite-volume measure is unique  except possibly for
  $\beta=1$.   This    implies   in    particular   that    there   is
  $\phi^0_{G_\infty,\beta,q}$-almost   surely   an  infinite   cluster
  whenever $\beta>1$.
\end{proof}

\begin{proof}[Corollary~\ref{square}]
  We present the proof in the case of the square lattice, the cases of
  triangular   and    hexagonal   lattices   being   the    same.   If
  $\frac{p_1}{1-p_1}\frac{p_2}{1-p_2}=\beta q$,  then $x_1x_2=1$ where
  $x_i=\frac{p_i}{(1-p_i)\sqrt  {q\beta}}$.  By embedding  the  square
  lattice in such a way that every  face is a rectangle inscribed in a
  circle of radius  1, and the aspect ratio is  given by $x_1/x_2$, we
  obtain an isoradial graph with critical weights $x_1$ and $x_2$. The
  model is  therefore subcritical (respectively supercritical)  if and
  only if $\beta<1$ (respectively $\beta>1$).
\end{proof}

\begin{proof}[Corollary~\ref{Potts}]
 Corollary~\ref{Potts} follows directly from the classical coupling between
  random-cluster models and Potts models.
\end{proof}
\paragraph{Acknowledgments.}

This work has been accomplished during the stay of the first author in
Geneva. The authors were supported  by the ANR grant 10-BLAN-0123, the
EU Marie-Curie RTN  CODY, the ERC AG  CONFRA, the NCCR SwissMap founded by the Swiss NSF, as well as  by the another grant from the Swiss
{NSF}.  The authors  would like  to thank  Geoffrey Grimmett  for many
fruitful discussions.

\bibliographystyle{siam}
\bibliography{bibli}

\begin{flushright}\footnotesize\obeylines
  \textsc{Unit\'e de Math\'ematiques Pures et Appliqu\'ees}
  \textsc{\'Ecole Normale Sup\'erieure de Lyon}
  \textsc{F-69364 Lyon CEDEX 7, France}
  \textsc{E-mail:} \url{Vincent.Beffara@ens-lyon.fr}
  \bigskip
  \textsc{Section de Math\'ematiques}
  \textsc{Universit\'e de Gen\`eve}
  \textsc{Gen\`eve, Switzerland}
  \textsc{E-mails:} \url{hugo.duminil@unige.ch}, %
  \url{stanislav.smirnov@unige.ch}
\end{flushright}

\end{document}